\documentclass[journal]{IEEEtran}

\usepackage[T1]{fontenc}
\usepackage{microtype}     
\usepackage{amsmath}
\usepackage{amssymb}
\usepackage{mathtools}
\usepackage{bm}            
\usepackage{bbm}
\usepackage{nicefrac}     

\usepackage{graphicx}
\usepackage[dvipsnames]{xcolor}
\usepackage{tikz}
\usepackage{pgfplots}
\usepgfplotslibrary{groupplots}
\pgfplotsset{compat=1.18}  
\usetikzlibrary{
    arrows.meta,
    automata,
    decorations.pathreplacing,
    patterns,
    patterns.meta,
    positioning,
    shapes.geometric,
    calc,               
    matrix
}
\usepackage[caption=false,font=footnotesize]{subfig}

\usepackage{booktabs}       
\usepackage{multirow}
\usepackage{array}     
\usepackage{enumitem}
\usepackage{amsthm}

\usepackage{url}
\usepackage{cite}              
\usepackage{orcidlink}
\usepackage{hyperref}

\theoremstyle{plain}
\newtheorem{theorem}{Theorem}
\newtheorem{lemma}[theorem]{Lemma}

\theoremstyle{definition}
\newtheorem{definition}{Definition}

\theoremstyle{remark}
\newtheorem{remark}{Remark}

\definecolor{ultramarineblue}{rgb}{0.05, 0.6, 0.96}
\colorlet{mycolor1}{teal!20!white}
\colorlet{mycolor2}{ultramarineblue!20!white}

\usepackage[table]{xcolor}
\definecolor{HeaderBlue}{RGB}{31,78,121}
\definecolor{SubHeaderBlue}{RGB}{221,235,247}
\definecolor{MetricGray}{RGB}{242,242,242}

\newcommand{\nmax}{n_{\mathrm{max}}}
\newcommand{\transpose}{\mathrm{T}}

\newcommand{\CP}{\mathrm{C_P}}  
\newcommand{\CS}{\mathrm{C_S}}

\newcommand{\BP}{B_{\CP}}
\newcommand{\BS}{B_{\CS}}
\newcommand{\LBP}{\widetilde{B}_{\CP}}
\newcommand{\LBS}{\widetilde{B}_{\CS}}

\newcommand{\E}{\mathbb{E}}                 
\newcommand{\var}{\mathbb{V}\mathrm{ar}}    
    
\newcommand{\Prob}{\mathbb{P}}              
\newcommand{\diff}{\,\mathrm{d}}              

\DeclareMathOperator{\RealPart}{Re}
\DeclareMathOperator{\Det}{det}
\DeclareMathOperator{\adj}{adj}
\DeclareMathOperator{\Exp}{Exp}

\begin{document}

% --- Title and Author Information (using \thanks for affiliations) ---
\title{Analysis of Triggered Packet Streams: A Matrix-Analytic Method for Exponential Triggering Delays}

\author{
    Mehran Rahnamania\orcidlink{0009-0003-9098-5586},
    Michel Mandjes\orcidlink{0000-0001-6783-4833},
    and Farid Ashtiani\orcidlink{0000-0002-6955-1711}
    \thanks{Mehran Rahnamania and Farid Ashtiani are with the Department of Electrical Engineering, Sharif University of Technology, Tehran, Iran (e-mails: mehran.r109@yahoo.com; ashtianimt@sharif.edu).}
    \thanks{Michel Mandjes is with the Mathematical Institute, Leiden University, P.O. Box 9512, 2300 RA Leiden, The Netherlands, and also with the Korteweg-de Vries Institute for Mathematics, University of Amsterdam, Amsterdam, The Netherlands (e-mail: m.r.h.mandjes@math.leidenuniv.nl).}
    \thanks{Manuscript received Month Day, Year; revised Month Day, Year. This work was supported by ...}
}
\maketitle

% --- Abstract and Keywords ---
\begin{abstract}
In many communication networks, the transmission of a packet may automatically trigger the transmission of a subsequent packet from the same source after a (possibly random) delay, without requiring acknowledgment or feedback. Such behavior arises in multi-stage status updating, proactive protocols, and other applications where users generate causally dependent packet streams. In this paper, in order to analyze these systems, we introduce the $\mathrm{M^T/G/1}$ queue. In this model, primary customers arrive according to a Poisson process, and each primary customer triggers a secondary customer to join the queue after an independent delay. This arrival mechanism falls outside the scope of classical queueing models with renewal arrival processes. When the triggering delays follow an exponential distribution, we exploit the memoryless property to set up a tractable Markov description. 
By truncating the number of pending secondary customers, we derive a finite system of linear algebraic equations in the Laplace--Stieltjes transform domain and solve them using matrix-analytic methods. Based on the resulting workload distribution, we compute class-specific performance metrics using PASTA for primary customers and Palm conditioning for secondary customers. Finally, we validate the accuracy of this truncation through numerical experiments.
\end{abstract}

\begin{IEEEkeywords}
Queueing models, $\mathrm{M^T/G/1}$ queue, matrix-analytic methods, causally dependent packet streams.
\end{IEEEkeywords}

% --- Main Body Sections ---
\section{Introduction}
\label{sec:intro}

\IEEEPARstart{I}{n} modern communication networks, the transmission of a packet may automatically trigger a subsequent packet to be transmitted from the same source after a (possibly random) delay, without requiring acknowledgment or feedback on the status of the primary transmission~\cite{11571174,Rahnamania2024CorrelatedArrivals}. This \textit{triggering} dynamic underlies applications ranging from the Internet of Things (IoT) to next-generation wireless systems, where numerous independent users operate concurrently, sharing a common radio resource, yet each user exhibits an internal causal dependency across its own packet transmissions.

A representative example arises in industrial and environmental IoT monitoring. When an abnormal condition is detected (e.g., a fire or gas leak), a sensor node immediately transmits a short alarm message to a gateway. In parallel, it begins collecting detailed environmental data (e.g., images, waveforms, or high-resolution telemetry), which is transmitted only after hardware- and processing-induced delays~\cite{s23073544}. A similar pattern occurs in wireless systems designed for ultra-reliable low-latency communication (URLLC), where, in a basic setting, a device transmits a packet and automatically schedules a replica transmission after a deterministic or random delay, depending on the system design, to improve reliability without the additional latency overhead introduced by traditional feedback mechanisms~\cite{9174916}.

These examples illustrate a common communication pattern in which many independent users operate concurrently on a shared communication link, while each user’s packet transmissions are causally dependent. Despite its practical relevance, the rigorous performance analysis of such systems --- typically carried out using queueing-theoretic methods --- remains largely underexplored.

\subsection{Modeling and Analysis Challenges}
Traditional queueing theory relies heavily on the assumption of renewal arrival processes~\cite{Kleinrock1975Volume1}. While extensions such as Markov-modulated arrivals (MMPP, BMAP)~\cite{FISCHER1993149,chakravarthy2001batch,Latouche1999MatrixAnalytic}, Hawkes processes~\cite{10.1093/biomet/58.1.83}, and long-range dependent models~\cite{282603} capture correlations or clustering in aggregate traffic, they are limited in representing micro-level causal dependencies between individual arrivals. Exact queueing analysis under these models is typically restricted to a few special cases or relies on heavy-traffic approximations~\cite{4863c94f-0a3e-32f9-8292-01e3902d6958,daw2018queues}. \textit{G-networks} provide a queueing framework for systems with internal interactions that induce dependencies within arrivals~\cite{6796323,ec60910f-c7c5-39b2-8655-659b6b05ba84}. Positive and negative signals are used to model concurrent movements~\cite{1130000795627348864}. However, maintaining quasi-reversibility (QR) --- a sufficient condition for product-form solutions --- requires certain structural conditions~\cite[Ch. 3 and 4]{1130000795627348864} that are not satisfied by the causal dependencies inherent in our model.

By contrast, the systems we consider exhibit \emph{triggered arrivals}: each independent source transmits a \textit{primary} packet that, after a random delay, triggers a \textit{secondary} packet. When the transmissions from all sources are aggregated at a gateway, the resulting arrival stream exhibits dependencies that are not adequately captured by standard models in the literature. Modeling and analyzing such triggered-arrival systems, therefore, requires different methodological approaches.

The resulting dependency structure introduces several analytical challenges. Classical queueing results, such as those for $\mathrm{M/G/1}$ or $\mathrm{G/M/1}$ systems, rely on independent interarrival processes and are therefore inapplicable. Models based on correlated arrivals, such as MAPs, struggle to represent the triggering mechanism, since secondary arrivals may occur long after their corresponding primaries and can be interleaved with many independent customers from other sources. Self-exciting processes, such as Hawkes models, capture clustering effects but do not reflect the explicit one-to-one pairing and delay structure that characterize triggered arrivals. Finally, matrix-analytic methods become intractable, as an exact Markovian description would need to track both the queue length and the residual triggering delay of each pending secondary arrival; in an infinite-buffer system, this leads to an unbounded, infinite-dimensional state space.

To address this gap, we introduced the $\mathrm{M^T/M/1}$ queue in our preliminary work~\cite{Rahnamania2024CorrelatedArrivals}, in which \textit{primary} customers arrive according to a Poisson process with rate $\lambda$, and each arrival triggers a \textit{secondary} customer after an independent, generally distributed \textit{triggering delay}. The Poisson assumption for primary arrivals is justified by the fact that the superposition of many independent point processes (e.g., from {a large number} of sensors or users) is well approximated by a Poisson process.

An initial heuristic approximation in~\cite{Rahnamania2024CorrelatedArrivals} mapped the triggered-arrival system onto a dual quasi-reversible tandem queue augmented with redundant customers. Although this approximation provides estimates for mean system times, it does not yield higher-order moments or full distributional information.
Subsequently, in~\cite{11571174}, we refined the analysis by introducing an auxiliary customer class and a Markov-chain coupling argument. This allowed us to show that a secondary arrival sees the system either in steady state or with exactly one additional customer. Based on this structural property, we developed a second heuristic approximation that represents the system as a multi-class $\mathrm{M/G/1}$ queue with effective service times, enabling the derivation of an approximate Laplace--Stieltjes transform (LST) of the system-time distribution for each customer class. Both~\cite{Rahnamania2024CorrelatedArrivals} and~\cite{11571174}, however, are strictly restricted to \textit{exponential} service times that must be \textit{identical} for both primary and secondary customers, and their derivations rely on heuristic arguments. 

To overcome these limitations, this paper establishes a framework for the $\mathrm{M^T/G/1}$ queue, in which the service times of primary and secondary customers are allowed to \textit{differ} and follow \textit{general} distributions, while the triggering delays follow an exponential distribution. We exploit the memoryless property of the triggering mechanism to construct a tractable two-dimensional Markovian description of the system state, characterized by the virtual queue workload and the number of pending secondary customers. Using Laplace--Stieltjes transforms (LSTs) and a finite-state truncation scheme with appropriate boundary conditions, we reduce the steady-state integro-differential equations to a compact matrix-algebraic system, which we solve using matrix-analytic methods. Finally, we characterize the performance of both customer classes: waiting times for primary customers are derived using the Poisson arrivals see time averages (PASTA) property, while Palm conditioning is used to describe the state seen by the secondary customers. This framework yields distributions, as well as moments of system times such as means and variances, to any desired accuracy.

\subsection{Contributions}
The key contributions of this paper are as follows:
\begin{itemize}
    \item \textit{Structural characterization of triggered arrivals}: We provide a formal mathematical proof (Appendix~\ref{app:proof_arrival}) demonstrating that the aggregate arrival process in the $\mathrm{M^T/G/1}$ queue is not a renewal process for any non-degenerate triggering delay distribution. This specific non-renewal structure imposes fundamental limitations on classical queueing models that assume renewal-based or correlated arrival processes.
    
    \item \textit{Matrix-analytic framework for generally distributed service times}: Overcoming the heuristic limitations of prior models, we develop a mathematical formulation that accommodates general, class-distinct service time distributions. Exploiting the memoryless property of the exponential triggering delays, we construct a two-dimensional Markovian representation of the system state, formulate a finite-state approximation, and obtain a matrix-analytic solution for the stationary {workload} distribution (to any desired accuracy).

    \item \textit{Class-specific performance metrics}: We identify the structural differences in how primary and secondary arrivals interact with the queue. Using the PASTA property for primary customers and Palm conditioning for secondary customers, we obtain the LST representations of the distributions, as well as closed-form expressions for the means and variances of their waiting and system times.
\end{itemize}

\subsection{Organization}
The remainder of this paper is organized as follows.
Section~\ref{sec:app} motivates the theoretical framework by examining specific practical applications, followed by the formal definition of the $\mathrm{M^T/G/1}$ system in Section~\ref{sec:model}. In Section~\ref{sec:exp}, a matrix-analytic solution is presented for triggering delays following an exponential distribution and class-specific generally distributed service times. Section~\ref{sec:numerical} provides comprehensive numerical results and validation simulations alongside a detailed structural discussion. Finally, Section~\ref{sec:conclusion} provides concluding remarks.

\section{Applications in Communication}
\label{sec:app}

The $\mathrm{M^T/G/1}$ queue captures a proactive, open-loop communication dynamic common to modern systems in which the transmission of a \emph{primary} packet may automatically trigger the transmission of a corresponding \emph{secondary} packet from the same source after a (possibly random) delay. {This mechanism differs from those used in traditional reactive, feedback-driven protocols (e.g., ARQ).} In this section, we examine applications in which the triggered-arrival behavior is inherent to the system design and demonstrate why the $\mathrm{M^T/G/1}$ framework is a fundamental tool for modeling and analyzing many communication systems.

\subsection{Multi-Stage Status Updates and Age of Information}
In sensing and control architectures, data acquisition is often a multi-stage process. An IoT anomaly detection system may transmit an immediate, lightweight alarm (the \emph{primary} packet) while simultaneously initiating the collection of a high-fidelity environmental report. This data-intensive report is transmitted as the \emph{secondary} packet only after a hardware- or processing-induced delay~\cite{7466086,9380899}. 

The resulting two-stage update paradigm is directly relevant to the analysis of age of information (AoI), as the freshness of information depends not only on update frequency but also on the internal structure of individual update cycles. However, existing AoI queueing models~\cite{YatesKaul2019,Moltafet2020,10898038} typically assume mutually independent update streams and therefore cannot capture the micro-level causal dependency between a primary status update and its corresponding delayed update.

\subsection{Proactive Wireless Protocols and URLLC}
To meet the stringent reliability and latency requirements of wireless communications (e.g., URLLC in 5G and 6G), protocols are shifting from reactive to proactive schemes. For example, a device sends an initial transmission (the \emph{primary} packet) and automatically schedules a proactive replica (the \emph{secondary} packet) to be sent after a time offset. This ``blind repetition'' strategy occurs regardless of the first packet's success, bypassing the latency penalty of feedback cycles~\cite{9174916}. 

\subsection{Split Inference for Edge AI}
In Edge Intelligence, a device running a deep neural network (DNN) often employs an ``early exit'' architecture. When an input is sampled, the device performs a rapid, shallow inference and immediately transmits the preliminary result (the \emph{primary} packet) to the edge server. In parallel, the device continues to compute the deeper layers of the DNN to extract a refined feature vector. This computation introduces a processing delay. Upon completion, the feature vector (the \emph{secondary} packet) is queued for transmission over the same wireless interface~\cite{10621887,7488250}.

\vspace{0.75em}

These examples demonstrate that triggering dynamics are not merely theoretical curiosities, but arise naturally in the operation of many real-world communication and sensing systems.
To unify these application domains, we now formulate the triggering dynamics within a rigorous queueing-theoretic framework.

\section{System Model and Challenges}
\label{sec:model}
Motivated by the presence of triggering dynamics in communication networks, we now introduce a queueing-theoretic framework for their analysis. In this section, we formally define the $\mathrm{M^T/G/1}$ queue, describe the probabilistic mechanisms governing primary and secondary arrivals, and highlight the structural properties of the aggregate arrival process that violate the classical renewal assumption.

\subsection{Arrival Process and Queue Dynamics}
\label{subsec:model:description}

\begin{figure}
    \centering
    \begin{tikzpicture}[scale=1]
    
    \foreach \x in { 0.65, 0.925, 1.2,1.475}
    \draw[thick,black,fill=mycolor1] (\x+0.5, 0.075) rectangle (\x+0.7, 0.925);

    \draw[->, line width=0.55mm, OrangeRed] (2.7, 0.7) -- (3.7, 0.7);
    \draw[->, line width=0.55mm,Plum, densely dashed] (2.7, 0.4) -- (3.7, 0.4);
    \node[Plum] at (-4.15,0.5-0.15) {$\CS$};
    \node[OrangeRed] at (-4.15,1.2) {$\CP$};
    \node[black] at (-1.25,2.15-0.1) {\scriptsize triggering delay};
    \node[black] at (-1.25,-0.9) {\scriptsize triggering delay};

    \draw[thick,black,fill=OrangeRed]  (-0.2,0.8-0.2) rectangle (0,1.6-0.2);
    \draw[thick,black,fill=Plum]  (-0.2-2.2,0.1-0.4) rectangle (0-2.2,0.8-0.3);

    \draw[thick,black,pattern=crosshatch, pattern color=OrangeRed]  (-0.2-0.9,0.8-0.2) rectangle (0-0.9,1.6-0.2);
    \draw[thick,black,pattern=crosshatch, pattern color=Plum]  (-0.2-1.5,0.1-0.4) rectangle (0-1.5,0.8-0.3);

    \draw[thick,black, pattern=checkerboard, pattern color=OrangeRed]  (-2.6,0.6) rectangle (-2.4,1.4);
    \draw[thick,black, pattern=checkerboard, pattern color=Plum]  (-3.7,0.1-0.4) rectangle (-3.5,0.8-0.3);
    
    \draw[->,  line width=0.55mm, OrangeRed] (0.05, 1.1-0.2)  to[in=180, out=0] (0.8, 0.7);
    \draw[-,  line width=0.55mm, OrangeRed] (-0.85, 1.1-0.2) -- (-0.27, 1.1-0.2);
    \draw[-,  line width=0.55mm, OrangeRed] (-2.35, 1.1-0.2) -- (-1.175, 1.1-0.2);
    \draw[-,  line width=0.55mm, OrangeRed] (-4, 1.1-0.2) -- (-2.65, 1.1-0.2);

    \draw[densely dashed,->,  line width=0.55mm, Plum] (-1.4, 0.4-0.3) -- (0, 0.1)--(0, 0.4-0.3) to[in=180, out=0] (0.8, 0.3);
    \draw[densely dashed,-,  line width=0.55mm, Plum] (-2.15, 0.4-0.3) -- (-1.75, 0.4-0.3);
    \draw[densely dashed,-,  line width=0.55mm, Plum] (-3.45, 0.4-0.3) -- (-2.45, 0.4-0.3);
    \draw[densely dashed,-,  line width=0.55mm, Plum] (-4, 0.4-0.3) -- (-3.75, 0.4-0.3);

      \draw[-, line width=0.2mm, densely dotted, black] (-0.1, 0.6)  -- (-0.1, -0.6);
    \draw[<->, line width=0.5mm, densely dotted, black] (-2.3, -0.6)  -- (-0.1, -0.6);
    \draw[-, line width=0.2mm, densely dotted, black] (-2.3, -0.3)  -- (-2.3, -0.6);

    \draw[<->, line width=0.5mm, densely dotted, black] (-1.6, 1.9-0.2)  -- (-1, 1.9-0.2);
    \draw[<->, line width=0.5mm, densely dotted, black] (-3.6, 1.9-0.2)  -- (-2.5, 1.9-0.2);
    \draw[-, line width=0.2mm, densely dotted, black] (-3.6, 1.7)  -- (-3.6, 0.5);
    \draw[-, line width=0.2mm, densely dotted, black] (-2.5, 1.7)  -- (-2.5, 1.4);
    
    \draw[-, line width=0.2mm, densely dotted, black] (-1.6, 1.9-0.2)  -- (-1.6, 0.5);
    \draw[-, line width=0.2mm, densely dotted, black] (-1, 1.9-0.2)  -- (-1, 1.5-0.1);

    \draw[line width=0.55mm,fill=mycolor2]  (2.75,0.5) circle [radius=0.5];
    \node at (2.75,0.5) {};
    \draw[line width=0.55mm] (0.5,0) -- (2.25,0) -- (2.25,1) -- (0.5, 1);
    
\end{tikzpicture} 
    \caption{Illustration of the $\mathrm{M^T/G/1}$ queue. Primary customers ($\CP$) arrive via a Poisson process, and each triggers a corresponding secondary customer ($\CS$) to join the queue after a (possibly random) delay, referred to as the triggering delay. In this paper, triggering delays follow an exponential distribution.}
    \label{fig:model:system}
\end{figure}

We consider a single-server queueing system with an infinite buffer capacity, operating under the first-come-first-served (FCFS) service discipline. 
We denote this system by $\mathrm{M^T/G/1}$, where ``T'' signifies the specific triggering structure detailed below.
As illustrated in Fig.~\ref{fig:model:system}, the system is defined by the interactions between two distinct classes of customers:
\begin{itemize}
    \item \emph{Primary Customers} ($\CP$): These customers arrive according to a homogeneous Poisson process with rate~$\lambda$.
    Their service times $X_{\CP,1}, X_{\CP,2}, \dots$ are i.i.d., denoted by a generic random variable $X_{\CP}$, following a general distribution function $\BP(x) \coloneqq \Prob (X_{\CP} \le x)$ with LST $\LBP(s) \coloneqq \E[e^{-sX_{\CP}}]$.
    
    \item \emph{Secondary Customers} ($\CS$): Each arrival of a $\CP$ triggers a corresponding $\CS$ to join the queue after a random delay, referred to as the \emph{triggering delay}. These triggering delays $\{Y_i\}_{i\ge1}$ are independent of the service times of both classes and are modeled as i.i.d.\ random variables, represented by a generic non-negative random variable $Y$ with a general distribution function $F_Y(y)\coloneqq \Prob (Y \le y)$.
    Their service times $X_{\CS,1}, X_{\CS,2}, \dots$ are i.i.d., denoted by a generic random variable $X_{\CS}$, following a general distribution function $\BS(x) \coloneqq \Prob (X_{\CS} \le x)$ with LST $\LBS(s) \coloneqq \E[e^{-sX_{\CS}}]$.
\end{itemize}

The total average arrival rate to the system is $ 2\lambda$, as each $\CP$ generates exactly one $\CS$. For queueing stability, the total arrival rate must be less than the service rate, i.e.,
\begin{equation}\label{eq:model:stability}
    \rho = \lambda \big( \E[X_{\CS}] + \E[X_{\CP}] \big) < 1,
\end{equation}
where $\rho$ is the system utilization factor~\cite{asmussen2003applied}. Our analysis assumes the system operates in this stable regime. 

To understand the fundamental difficulty of analyzing this system, we must first establish the statistical properties of the arrival process.

\begin{lemma}\label{lem:model:arrival_props}
  The arrival process of an $\mathrm{M^T/G/1}$ queue with a non-degenerate triggering delay distribution exhibits the following properties:
    \begin{enumerate}[label=(\roman*)]
        \item The marginal arrival process of $\CP$ customers is a Poisson process.
        \item The marginal arrival process of $\CS$ customers is a Poisson process.
        \item The aggregate arrival process of $\CP$ and $\CS$ customers is \emph{not} a renewal process.
    \end{enumerate}
\end{lemma}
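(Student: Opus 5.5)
Part (i) holds by definition of the model: $\CP$ arrivals are a homogeneous Poisson process with rate $\lambda$. For (ii), we use the displacement theorem for Poisson processes. Write the primary arrival epochs as $\{T_i\}$ and the secondary epochs as $\{T_i + Y_i\}$, with the $Y_i$ i.i.d.\ and independent of $\{T_i\}$. The marked process $\{(T_i,Y_i)\}$ is then a Poisson process on $\mathbb{R}\times[0,\infty)$ with intensity $\lambda\,\diff t\,\diff F_Y(y)$. Its image under $(t,y)\mapsto t+y$ is again Poisson, with intensity measure $\lambda\int F_Y(\diff y)\,\diff t=\lambda\,\diff t$. For the system started in the stationary regime (primaries on the whole line), the $\CS$ process is therefore homogeneous Poisson with rate $\lambda$. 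Concretely, I would check this through the probability generating functional: for a test function $f\ge 0$,
\begin{equation*}
\E\Big[\exp\Big(-\sum_i f(T_i+Y_i)\Big)\Big]=\exp\Big(-\lambda\int_{\mathbb{R}}\big(1-\E[e^{-f(t+Y)}]\big)\diff t\Big),
\end{equation*}
and the substitution $u=t+Y$ yields the Poisson($\lambda$) functional.

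For (iii), I would argue by contradiction. Suppose the superposition $N=N_{\CP}+N_{\CS}$ (rate $2\lambda$) were a stationary renewal process. A renewal process is characterized by its interarrival law and has uncorrelated successive intervals. The cleanest route is through the second-order structure. Let $C(h)=\mathrm{Cov}(N(0,t],N(s,s+h])$, or equivalently the reduced second moment (Palm) density. By the cluster representation (Neyman–Scott with cluster $\{0,Y\}$), $N$ is a Poisson cluster process. Its pair correlation density is
\begin{equation*}
\rho_2(u)=(2\lambda)^2+\lambda\big(f_Y(u)+f_Y(-u)\big),
\end{equation*}
and in general the excess measure is $\lambda(F_Y+F_{-Y})$. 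So the Palm intensity at lag $u>0$ is $m(u)=2\lambda+\tfrac12 f_Y(u)$. Since $\int_0^\infty \tfrac12 f_Y=\tfrac12$, the expected excess number of points is finite, which gives $\var N(0,t]\sim 2\lambda t\,(1+1)=4\lambda t$, i.e.\ an index of dispersion $I=\lim \var N(0,t]/\E N(0,t]=2$. For a renewal process, $I=c^2$, where $c$ is the coefficient of variation of the interarrival time $A$, so we would need $c^2=2$ with $\E A=1/(2\lambda)$.

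The main obstacle is that a single moment identity cannot, by itself, exclude a renewal process. I would therefore combine two facts. First, by the Palm/renewal identity, the renewal density $m(u)-\text{(atoms)}$ must equal $\sum_{k\ge1} f_A^{*k}(u)$. Second, the Palm probability that the point at $0$ has no other point in $(0,u]$ must equal $1-F_A(u)$. On the Palm side, the typical point is a primary with probability $1/2$, and its survival function of the first forward gap is computable from the cluster structure:
\begin{equation*}
\Prob^0(N(0,u]=0)=e^{-2\lambda u}\Big(\tfrac12\big(1-F_Y(u)\big)+\tfrac12\Big).
\end{equation*}
This determines $F_A$ explicitly. I would then compute the second-order renewal density $f_A+f_A*f_A+\dots$ from this $F_A$, for instance via Laplace transforms, $\hat m=\hat f_A/(1-\hat f_A)$, and compare it with $2\lambda+\tfrac12\hat f_Y$. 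Equality should force a functional equation in $\widetilde F_Y$ that only degenerate $Y$ satisfies. As a fallback, I would use the alternative check of independence of two consecutive intervals: compute $\Prob^0(\text{gap}_1>u,\text{gap}_2>v)$ from the cluster structure and show that it differs from the product. For exponential $Y$, a small-$u,v$ expansion should already exhibit the dependence, and the general nondegenerate case follows by the same expansion at a point where $F_Y$ is strictly between $0$ and $1$.
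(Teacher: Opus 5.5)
Parts (i) and (ii) are fine and match the paper's argument (the displacement theorem; your probability-generating-functional check makes it concrete). Part (iii) has a genuine gap.

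\textbf{The Palm gap formula is wrong.} Under the Palm measure of this Poisson cluster process, the partner of the typical point (at $+Y$ or $-Y$, each with probability $\tfrac12$) is superposed with an independent copy of the stationary \emph{cluster} process. It is not superposed with a Poisson process of rate $2\lambda$. The void probability of that copy is $\exp(-2\lambda u+\lambda\alpha(u))$ with $\alpha(u)=\int_0^u F_Y(v)\diff v$ (set $e^{-s}=0$ in \eqref{eq:app:logLST_N}). Hence
\[
\Prob^0\bigl(N(0,u]=0\bigr)=\tfrac12\bigl(1+\bar F_Y(u)\bigr)\exp\bigl(-2\lambda u+\lambda\alpha(u)\bigr).
\]
As a check, this integrates over $u\in(0,\infty)$ to the mean gap $1/(2\lambda)$. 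Your expression integrates to $\frac{1}{4\lambda}+\frac12\int_0^\infty e^{-2\lambda u}\bar F_Y(u)\diff u<\frac{1}{2\lambda}$.

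\textbf{The decisive step is only asserted.} Your renewal-density identity $\widehat f_A/(1-\widehat f_A)=2\lambda/\theta+\tfrac12\widehat F_Y$ is exactly the paper's $\delta^2$ relation \eqref{eq:rel_refined}. The claim that this identity, together with the gap law, admits no nondegenerate $Y$ carries all the weight, and you have not proved it. It amounts to showing that a nonlinear integral equation for $F_Y$ has no nondegenerate solution. A small-$u$ expansion (for $F_Y$ smooth at $0$) gives, at its first nontrivial order, only $f_Y(0^+)\bigl(f_Y(0^+)-4\lambda\bigr)=0$, so a real case analysis would still be needed.

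\textbf{The fallback has the same blind spot.} Take $Y\sim\Exp(\gamma)$. The Palm joint density of the first two gaps at $(0,0)$ equals the third-order product density divided by $2\lambda$, namely $4\lambda^2+3\lambda\gamma$. The product of the marginals is $(2\lambda+\gamma/2)^2$. The difference, $\frac{\gamma}{4}(4\lambda-\gamma)$, vanishes at $\gamma=4\lambda$. This is precisely the case that survives the paper's moment matching, so the leading-order expansion does not ``already exhibit the dependence'' for all exponential $Y$.

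\textbf{What is missing is a genuinely third-order identity} that pins down $Y$ globally rather than locally.
\begin{itemize}
\item Under the renewal hypothesis, the Palm density of points at $x$ and $x+y$ ($x,y>0$), given a point at $0$, factorizes as $h(x)h(y)$ with $h=2\lambda+\frac12 f_Y$.
\item The cluster structure instead gives $4\lambda^2+\lambda\bigl(f_Y(x)+f_Y(y)+f_Y(x+y)\bigr)$.
\item Equating the two forces $4\lambda f_Y(x+y)=f_Y(x)f_Y(y)$, i.e.\ $Y\sim\Exp(4\lambda)$.
\item Without densities, a double Laplace transform gives $1/\widehat F_Y(\theta_1)-1/\widehat F_Y(\theta_2)=(\theta_1-\theta_2)/(4\lambda)$. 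Its diagonal form is the paper's ODE \eqref{eq:ODE_refined}, which the paper obtains by matching $\delta^3$ coefficients.
\end{itemize}

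Once $Y\sim\Exp(4\lambda)$ is forced, your corrected void-probability idea finishes the proof. The Palm gap survival becomes
\[
\tfrac12e^{-\lambda u}\exp\bigl(-\tfrac14(1-e^{-4\lambda u})\bigr)(1+e^{-4\lambda u}),
\]
whose Laplace transform has poles at $-(1+4k)\lambda$ for every $k\ge 0$. But \eqref{eq:rel_refined} forces
\[
\int_0^\infty e^{-\theta u}\bar F_A(u)\diff u=\frac{\theta+4\lambda}{\theta^2+8\lambda\theta+8\lambda^2},
\]
which is rational. This contradiction is a more concrete version of the paper's final non-rationality step.
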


\begin{proof}
    See Appendix~\ref{app:proof_arrival}.
\end{proof}
We note that the proof of this lemma is not immediate due to the dependence between the marginal streams and therefore requires an explicit argument. Indeed, there exist constructions of two dependent renewal processes whose superposition remains a renewal process~\cite{Jacod1975DependentPoisson}.

Lemma~\ref{lem:model:arrival_props} implies that although the marginal arrival processes of both $\CP$ and $\CS$ customers are individually Poisson (each with rate~$\lambda$), the \emph{aggregate arrival process} is \emph{not} a Poisson process. 
As established in the lemma, this arises from the inherent dependency between the two streams: each $\CS$ arrival occurs at the arrival time of its corresponding $\CP$ plus the triggering delay~$Y$. 
Consequently, the aggregate process does not, in general, constitute a renewal process.
Hence, the \emph{first analytical challenge} is that this interdependence prevents the direct application of standard queueing results, such as those based on embedded Markov chains for $\mathrm{M/G/1}$ or $\mathrm{G/M/1}$ systems.

The \emph{second analytical challenge} stems from the fact that this dependency structure differs fundamentally from that of established models such as Batch Markovian Arrival Processes (BMAPs) or Markov-Modulated Poisson Processes (MMPPs). 
While such models capture correlations between \emph{successive} interarrival times in the \emph{aggregate} stream, our model directly links each arrival epoch of a $\CP$ and a $\CS$ via a triggering mechanism, where a $\CS$ may arrive long after its corresponding $\CP$ and be interleaved with many other customers.

The \emph{third analytical challenge} concerns the dimensionality of the state space. A direct application of matrix-analytic methods becomes intractable, since a complete state description must track not only the number of customers in the queue, but also the residual triggering delay associated with every pending $\CS$ arrival. In an infinite-buffer system, the number of such pending arrivals is itself unbounded, resulting in an infinite-dimensional state space.

Even when approximating this state space by a finite one --- for instance, by truncating the number of pending $\CS$ customers --- the model still suffers from the \emph{curse of dimensionality}. Tracking multiple concurrent residual triggering delays leads to exponential growth of the state space with respect to the number of pending $\CS$ customers, rendering the approach computationally infeasible. Moreover, constructing the corresponding transition matrix becomes highly intricate, as it must simultaneously capture: (i) Poisson arrivals of $\CP$, (ii) the initiation of new triggering delays, (iii) the residual lifetimes of all active triggering delays, (iv) the completion of triggering delays resulting in $\CS$ arrivals, and (v) the remaining service time of the customer currently in service.

\begin{figure}
    \centering
        \begin{tikzpicture}[scale=0.9]
        \draw[->, line width=0.55mm, OrangeRed] (-0.725, 3.5) -- (-0.725, 1.25) to[in=180, out=-90] (0.25, 0.75);
        \draw[->, line width=0.55mm, OrangeRed] (2.6, 0.75) -- (3.5, 0.75);

        \draw[-, very thick, black] (-2.5, 0.5) -- (-2.25, 0.5);
        \draw[-, very thick, black] (-2.5, 0.525) -- (-2.25, 0.525);
        \draw[-, very thick, black] (-2.5, 0.55) -- (-2.25, 0.55);
        \draw[-, very thick, black] (-2.5, 0.475) -- (-2.25, 0.475);
        \draw[-, very thick, black] (-2.5, 0.45) -- (-2.25, 0.45);

        \draw[densely dashed,->, line width=0.55mm, Plum] (-0.8, 3.5) -- (-0.8, 3.4)  to[in=90, out=-90] (-3.5,2.3) -- (-3.5, 1)to[in=180, out=-90] (-2.75, 0.5);

        \draw[densely dashed,->, line width=0.55mm, Plum] (-1.25, 0.5) -- (0.25, 0.5);

        \draw[densely dashed,->, line width=0.55mm, Plum] (2.75, 0.5) -- (3.5, 0.5);

        \foreach \x in {0.65, 0.925, 1.2,1.475}
        \draw[thick,black,fill=mycolor1] (\x, 0.075) rectangle (\x+0.2, 0.925);

        \draw[ thick,pattern=checkerboard, pattern color=OrangeRed] (-0.7,3.6) rectangle (-0.5,4.4) ;
        \draw[ thick, black,fill=Plum] (-0.8,3.6) rectangle (-1,4.4) ;

        \node[OrangeRed] at (-0.2,3.75){$\CP$};
        \node[OrangeRed] at (-0.5,2.75){$\lambda$};
        \node[Plum] at (-1.35,3.75){$\CS$};
        \node[Plum] at (-1.3,3.1){$\lambda$};

        \draw[line width=0.55mm,fill=mycolor2]  (2.25,0.5) circle [radius=0.5];

        \draw[line width=0.55mm,fill=mycolor1] (-2.25,-0.35) arc[start angle=180, end angle=360,radius=0.5cm] --(-1.25,-0.35) -- (-1.25,2.2) -- (-2.25, 2.2) -- (-2.25,-0.35);

        \draw[line width=0.55mm,fill=mycolor2]  (-1.75,2.2) circle [radius=0.5];

        \draw[line width=0.55mm,fill=mycolor2]  (-1.75,0.5) circle [radius=0.5];

        \node at (-1.75,-0.25) {\huge\vdots};
        \node at (-1.75, 1.45) {\huge\vdots};

        \node at (1.5, -1.25) {$\mathrm{Q_M}$: $\mathrm{M^T/G/1}$};
        \node at (-2.5, -1.25) {$\mathrm{Q_A}$: $\mathrm{M/M/\infty}$};
        \draw[line width=0.55mm] (-3,0) -- (-2.5,0) -- (-2.5,1) -- (-3, 1);
        \draw[line width=0.55mm] (0,0) -- (1.75,0) -- (1.75,1) -- (0, 1);

    \end{tikzpicture}
    \caption{This figure depicts the auxiliary queue ($\mathrm{Q_A}$) that models the triggering delays between $\CP$ and $\CS$ customers. The main queue ($\mathrm{Q_M}$) operates in tandem with $\mathrm{Q_A}$; upon each $\CP$ arrival at $\mathrm{Q_M}$, a corresponding $\CS$ simultaneously arrives at $\mathrm{Q_A}$.}
    \label{fig:model:aux}
\end{figure}

\section{Matrix-Analytic Solution}
\label{sec:exp}

In this section, we develop a matrix-analytic solution for the case in which service times follow class-specific general distributions, while the triggering delays follow an exponential distribution, i.e., $Y \sim \Exp(\gamma)$. We first derive the stationary distribution of the system workload. We then characterize the system times experienced by primary customers ($\CP$) and secondary customers ($\CS$). Finally, numerical experiments demonstrate the accuracy of the analytical framework.

\subsection{The Auxiliary Queue}
\label{subsec:exp:aux_queue}

To mathematically track the pending $\CS$ customers, as illustrated in Fig.~\ref{fig:model:aux}, it is useful to introduce a conceptual queue, which we refer to as the \textit{auxiliary queue}, $\mathrm{Q_A}$. This queue does not alter the system dynamics but provides a clear framework for tracking the number and state of ongoing triggering delays.
We imagine that when a $\CP$ arrives at the \textit{main queue}, $\mathrm{Q_M}$, its corresponding $\CS$ simultaneously arrives at $\mathrm{Q_A}$. The ``service time'' in $\mathrm{Q_A}$ is exactly the random triggering delay $Y$. Upon service completion at $\mathrm{Q_A}$ (i.e., after time duration $Y$), the $\CS$ arrives at $\mathrm{Q_M}$. 

The memoryless property of the exponential distribution is the \emph{key} to our analysis, as we will see, it allows us to define a \emph{tractable} continuous-time Markov process based on the system workload and the number of $\CS$ customers in $\mathrm{Q_A}$, whose corresponding $\CP$ customers have arrived but whose triggering delays have not yet elapsed.

\subsection{State Space and Kolmogorov Equations}
\label{subsec:exp:state}

The main challenge in analyzing the $\mathrm{M^T/G/1}$ queue through the formulation of the underlying continuous-time Markov process is that the structure of the transition matrix becomes complicated due to the specific non-renewal of its aggregate arrival process. 
To address this issue, when the triggering delay follows an exponential distribution, we define an appropriate state descriptor that captures all the information required to characterize the system at time~$t$ and to predict its evolution over the infinitesimal interval~$(t, t+\Delta t]$ in a compact and analytically tractable form.
We describe the system state at time~$t$ by the pair $\big(N_{\mathrm{A}}(t), V_{\mathrm{M}}(t)\big)$, where:
\begin{itemize}
    \item $N_{\mathrm{A}}(t) \in \{0, 1, 2, \dots\}$ denotes the number of $\CS$ customers currently undergoing their triggering delay in $\mathrm{Q_A}$.
    \item $V_{\mathrm{M}}(t) \in [0, \infty)$ denotes the \emph{workload} of $\mathrm{Q_M}$, representing the total remaining service time of all customers in $\mathrm{Q_M}$ at time~$t$.
\end{itemize}

With this state representation, the stochastic process $\{(N_{\mathrm{A}}(t), V_{\mathrm{M}}(t)); t \ge 0\}$ constitutes a continuous-time Markov process, because the current state $(N_{\mathrm{A}}(t), V_{\mathrm{M}}(t))$ contains all information required to probabilistically describe the evolution of the system in the time window $(t, t + \Delta t]$.\footnote{As a sketch of the proof, consider an infinitesimal interval $(t, t + \Delta t]$ as $\Delta t\downarrow 0$, during which one of the following events may occur: (i) a $\CP$ arrival occurs with probability $\lambda\Delta t + o(\Delta t)$; (ii) a $\CS$ arrival (i.e., a service completion in $\mathrm{Q_A}$) occurs with probability $\gamma N_{\mathrm{A}}(t)\Delta t + o(\Delta t)$; and (iii) no arrival occurs with the complementary probability. 
Each of these transition probabilities depends only on the current state through $N_{\mathrm{A}}(t)$ and on the parameters $\lambda$ and $\gamma$. Furthermore, the service times of $\CP$ and $\CS$ customers (which determine the workload of $\mathrm{Q_M}$) are \emph{independent} of the system history.
Therefore, the future evolution of the process depends solely on its current state, thereby satisfying the Markov property.} 

It is crucial to note that if the triggering delays follow a general distribution, this state representation is no longer sufficient. 
In this case, one would need to track the residual triggering delay for each $\CS$ in $\mathrm{Q_A}$. 
The assumption of exponential delays reduces this complexity to a single integer descriptor, $N_{\mathrm{A}}(t)$.

Our goal is to characterize the steady-state
behavior of the system.
Let the joint time-dependent distribution of the process be defined, for $ x \ge 0, n \ge 0, t \ge 0$, as
\begin{equation*}
    F_t(n,x) := \Prob \big(N_{\mathrm{A}}(t) = n,  V_{\mathrm{M}}(t) \le x\big).
\end{equation*}

The distribution function $F_t(n,x)$ contains a probability mass (atom) at $x=0$, corresponding to the idle main queue $\mathrm{Q_M}$, which must be addressed separately.
Accordingly, we define
\begin{equation*}
    P_t(n) \coloneqq \Prob  \big( N_{\mathrm{A}}(t)=n, V_{\mathrm{M}}(t)=0 \big),
\end{equation*}
and for the continuous part of the distribution, where the workload is positive ($x>0$), we introduce the probability density function
\begin{equation*}
    f_t(n,x) \coloneqq \frac{\partial}{\partial x} F_t(n,x).
\end{equation*}
Hence for $x>0$,
\begin{equation}\label{eq:exp:pdf_def}
    F_t(n,x) = P_t(n) + \int_{0^+}^{x} f_t(n,u)  \diff u.
\end{equation}

To derive the forward Chapman-Kolmogorov equations governing this distribution, we examine the possible state transitions over an infinitesimal interval $(t, t+\Delta t]$ in the regime $\Delta t\downarrow 0$.
As mentioned, for $x>0$, the state $\big(N_{\mathrm{A}}(t+\Delta t) = n, V_{\mathrm{M}}(t+\Delta t) \le x\big)$ can be reached through the following events:
\begin{itemize}
    \item \emph{No arrival}: With probability $1-(\lambda+n\gamma)\Delta t+o(\Delta t)$, neither a $\CP$ nor a $\CS$ arrives at $\mathrm{Q_M}$. In this case, the workload decreases linearly by $\Delta t$. For the workload to be at most $x$ at time $t+\Delta t$, it must have been at most $x+\Delta t$ at time $t$. The contribution from this event is
    \begin{equation*}
        \big(1-(\lambda+n\gamma)\Delta t\big)F_t(n,x+\Delta t) + o(\Delta t).
    \end{equation*}
    
    \item \emph{Primary customer arrival}: A $\CP$ arrives with probability $\lambda\Delta t+o(\Delta t)$. 
    This event increases the number of $\CS$ customers in $\mathrm{Q_A}$ from $n-1$ to $n$.
    The workload also increases by the service time of the new $\CP$ arrival (i.e., $X_{\CP}$).
    The contribution is
    \begin{equation*}
        \lambda\Delta t \mathbbm{1}_{\{n\ge1\}}\int_{0}^{x} F_t(n-1,x-y)  \diff\BP(y) + o(\Delta t).
    \end{equation*}
    
    \item \emph{Secondary customer arrival}: If the system had $n+1$ pending $\CS$ customers at time $t$, one of them can complete its delay and arrive at the queue with probability $(n+1)\gamma\Delta t+o(\Delta t)$. 
    This event decreases the number of pending $\CS$ customers from $n+1$ to $n$, and the workload increases by the service time of the arriving $\CS$ (i.e., $X_{\CS}$).
    The contribution is
    \begin{equation*}
        (n+1)\gamma\Delta t \int_{0}^{x} F_t(n+1,x-y)  \diff\BS(y) + o(\Delta t).
    \end{equation*}
\end{itemize}

By summing these mutually exclusive contributions, we obtain the balance equation for the state distribution as follows:
\begin{equation*}
    \begin{split}
          F_{t+\Delta t}(n,x)
          &= \big(1-(\lambda+n\gamma)\Delta t\big)F_t(n, x+\Delta t)\\
          &\quad + \lambda\Delta t\ \mathbbm{1}_{\{n\ge1\}}\int_{0}^{x} F_t(n-1,x-y)  \diff\BP(y)\\
          &\quad + (n+1)\gamma\Delta t\int_{0}^{x} F_t(n+1,x-y)  \diff\BS(y)\\  
          &\quad+ o(\Delta t),\qquad (x>0).
    \end{split}
\end{equation*}
By rearranging, dividing by $\Delta t$, and taking the limit as $\Delta t \downarrow 0$, we obtain the following system of partial integro-differential equations: for $t, x>0$ and $n=0,1,2,\ldots$, 
\begin{equation*}
    \begin{split}
          \frac{\partial}{\partial t}F_t(n,x) &- \frac{\partial}{\partial x}F_t(n,x) 
          = -(\lambda+n\gamma)F_t(n,x) \\ &\quad + \lambda\mathbbm{1}_{\{n\ge1\}}\int_{0}^{x} F_t(n-1,x-y)  \diff\BP(y)\\
          &\quad + (n+1)\gamma\int_{0}^{x} F_t(n+1,x-y)  \diff\BS(y).
    \end{split}
\end{equation*}
It is observed that the last two terms on the right-hand side involve convolution operators.

We are particularly interested in the steady-state regime.
Assuming the system is stable and a steady-state distribution $F(n,x) \coloneqq \lim_{t\to\infty}F_t(n,x)$ exists, the time derivative vanishes.
This yields the final system of integro-differential equations that governs the steady-state behavior for $x>0$ and $n=0,1,2,\ldots$:
\begin{equation}\label{eq:exp:ide_steady}
   \begin{split}
        -\frac{\partial}{\partial x}F(n,x) &= -(\lambda+n\gamma)F(n,x)\\  &\quad+ \lambda\mathbbm{1}_{\{n\ge1\}}\int_{0}^{x} F(n-1,x-y)  \diff\BP(y)\\
        &\quad + (n+1)\gamma\int_{0}^{x} F(n+1,x-y)  \diff\BS(y).
   \end{split} 
\end{equation}
This equation completely describes the steady-state behavior of the workload of $\mathrm{Q_M}$ and the number of $\CS$ customers in $\mathrm{Q_A}$.
With the governing integro-differential equations established, the next step is to transform them into a tractable algebraic system.

\subsection{Transform Domain Analysis}
\label{subsec:exp:transform}

To solve the steady-state equations derived in \eqref{eq:exp:ide_steady}, we determine the LST with respect to the workload variable $x$, so as to convert our system of integro-differential equations and convolution operators into a system of algebraic equations.

Let $P_n \coloneqq \Prob  (N_{\mathrm{A}}=n, V_{\mathrm{M}}=0)$ be the steady-state probability that $\mathrm{Q_M}$ is idle when there are $n$ customers in $\mathrm{Q_A}$. 
For each $n \ge 0$, we define
\begin{equation*}
    \widetilde{\phi} _n(s) \coloneqq \E\big[e^{-sV_{\mathrm{M}}}\mathbbm{1}_{\{N_{\mathrm{A}}=n\}}\big]
    = P_n + \int_{0^+}^{\infty} e^{-s x} f(n,x)  \diff x,
\end{equation*}
for $\RealPart(s)\ge0$, where $f(n,x)$ is the probability density function for $x>0$.
To proceed, we leverage the following standard LST properties for \eqref{eq:exp:pdf_def} with the transform $\widetilde{\phi} _n(s)$:
\begin{itemize}
    \item The LST of $f(n,x)$ becomes
    \begin{equation*}
        \int_{0^+}^\infty e^{-sx}f(n,x)  \diff x = \widetilde{\phi} _n(s)-P_n.
    \end{equation*}
    \item The LST of $F(n,x)$ is related to $\widetilde{\phi} _n(s)$ by
    \begin{equation*}
        \int_{0}^\infty e^{-sx} F(n,x)  \diff x = \frac{\widetilde{\phi} _n(s)}{s}.
    \end{equation*}
    \item  The LST of a convolution yields
    \begin{equation*}
        \int_{0}^\infty  e^{-sx} \!\left(\int_{0}^{x} F(k,x-y)  \diff B_j(y)\!\right) \diff x
    =\! \frac{\widetilde{\phi} _{k}(s)}{s}\! \widetilde{B} _j(s),
    \end{equation*}
    where $j \in \{\CP, \CS\}$ and $\widetilde{B} _j(s)$ is the LST of the corresponding service time distribution of $\CP$ and $\CS$ customers.
\end{itemize}
Applying these properties to each term in the steady-state equation~\eqref{eq:exp:ide_steady}, for each $n \ge 0$,
\begin{equation*}
    \begin{split}
    - \big(\widetilde{\phi} _n(s)-P_n\big) &= -(\lambda+n\gamma)\frac{\widetilde{\phi} _n(s)}{s}\\ &\quad + \lambda\mathbbm{1}_{\{n\ge1\}}\frac{\widetilde{\phi} _{n-1}(s)}{s}\LBP(s) \\ &\quad + (n+1)\gamma\frac{\widetilde{\phi} _{n+1}(s)}{s}\LBS(s).
    \end{split}
\end{equation*}
Multiplying the entire equation by $s$ and rearranging the terms to group the unknown functions $\{\widetilde{\phi} _n(s)\}$, we arrive at an infinite-dimensional system of linear algebraic equations that relates the LSTs and the unknown idle probabilities $\{P_n\}$. This system consists of the following equations: for $n=0$,
\begin{equation}\label{eq:exp:sys_n0}
      (s-\lambda)\widetilde{\phi} _0(s) + \gamma\LBS(s)\widetilde{\phi} _1(s) = sP_0,
\end{equation}
for $n>0$,
\begin{equation}\label{eq:exp:sys_n}
    \begin{split}
        \lambda\LBP(s)\widetilde{\phi} _{n-1}(s) &+ (s-\lambda-n\gamma)\widetilde{\phi} _n(s)\\&+ (n+1)\gamma\LBS(s)\widetilde{\phi} _{n+1}(s) = sP_n.
    \end{split}
\end{equation}
We have successfully transformed the problem, but now face a new challenge: solving this infinite system of algebraic equations.
The following subsection addresses this by truncation with a proper boundary condition.

\subsection{Finite-State Matrix Formulation}
\label{subsec:exp:matrix}

The infinite system in~\eqref{eq:exp:sys_n0}--\eqref{eq:exp:sys_n} cannot be solved directly, as it forms an unbounded recurrence with state-dependent coefficients. To make it tractable, we truncate the state space of $\mathrm{Q_A}$ at a sufficiently large $\nmax$, such that the probability that the number of $\CS$ customers in $\mathrm{Q_A}$ exceeds $\nmax$ is negligible. This reduces the problem to a finite system of linear equations solvable via standard matrix-analytic methods.

The truncation requires a specific boundary condition at $n=\nmax$. 
We assume that when $N_{\mathrm{A}}=\nmax$, any new $\CS$ is blocked from entering $\mathrm{Q_A}$.
However, its corresponding $\CP$ is \emph{not} blocked and still joins $\mathrm{Q_M}$.
This means that a $\CP$ arrival at state $\nmax$ increases the workload but leaves the state of $\mathrm{Q_A}$ unchanged.
This boundary condition modifies the steady-state integro-differential equation~\eqref{eq:exp:ide_steady} for the specific case of $n=\nmax$ as follows:
\begin{equation*}
    \begin{split}
          -\frac{\partial}{\partial x}F(\nmax,x) &= -(\lambda+\nmax\gamma)F(\nmax,x) \\ &\quad  + \lambda\int_{0}^{x} F(\nmax-1,x-y)  \diff\BP(y)\\
          &\quad + \lambda\int_{0}^{x} F(\nmax,x-y)  \diff\BP(y).
    \end{split}
\end{equation*}
The final term reflects the self-transition at state $\nmax$ due to a $\CP$ arrival.
Applying the LST to this boundary equation yields the final row of our linear system for $n=\nmax$,
\begin{equation}\label{eq:exp:sys_nmax}
    \begin{split}
         \big(s - \nmax\gamma -& \lambda(1 - \LBP(s))\big)\widetilde{\phi} _{\nmax}(s)\\ &\qquad+\lambda\LBP(s)\widetilde{\phi} _{\nmax-1}(s) = sP_{\nmax}.
    \end{split}
\end{equation}
With this finite state space $\{0, 1, \dots, \nmax\}$, the resulting system of $\nmax+1$ linear equations can be expressed compactly in matrix form as
\begin{equation}
    {\boldsymbol M}(s) {\boldsymbol \Phi}(s) = s {\boldsymbol p},
    \label{eq:exp:matrix_sys}
\end{equation}
where ${\boldsymbol \Phi}(s) = [\widetilde{\phi} _0(s), \widetilde{\phi} _1(s), \dots, \widetilde{\phi} _{\nmax}(s)]^{\transpose}$ is the vector of unknown transforms, and ${\boldsymbol p} = [P_0, P_1, \dots, P_{\nmax}]^{\transpose}$ is the vector of idle probabilities.

The $(\nmax+1)\times(\nmax+1)$ characteristic matrix
${\boldsymbol M}(s)$ captures the system dynamics.
Its structure is tridiagonal, with a modified final row reflecting
the boundary condition at state $\nmax$:
\begin{equation*}
\resizebox{\columnwidth}{!}{$
{\boldsymbol M}(s)\! \!:=\! \!
\begin{pmatrix}
s-\lambda & \gamma\LBS(s) & \cdots & 0 \\[3pt]
\lambda\LBP(s) & s-\lambda-\gamma & \cdots & 0 \\[3pt]
0 & \lambda\LBP(s) & \ddots & \vdots \\[3pt]
\vdots & \ddots & \ddots & \nmax\gamma\LBS(s) \\[3pt]
0 & \cdots & \lambda\LBP(s) &
s\!-\!\nmax\gamma\!-\!\lambda\!\left(\!1\!-\!\LBP(s)\!\right)
\end{pmatrix}
$}
\label{eq:M_matrix}
\end{equation*}
The formal solution for the LST vector ${\boldsymbol \Phi}(s)$ is obtained by inverting the matrix ${\boldsymbol M}(s)$:
\begin{equation}\label{eq:exp:phi_sol}
    {\boldsymbol \Phi}(s) =  s  {\boldsymbol M}(s)^{-1} {\boldsymbol p} = s  \frac{\adj\big({\boldsymbol M}(s)\big) {\boldsymbol p}}{\Det\big({\boldsymbol M}(s)\big)};
\end{equation}
recall that the adjugate (or classical adjoint) $\adj({\boldsymbol A})$ of a square matrix ${\boldsymbol A}$ is the transpose of its cofactor matrix,  satisfying the property ${\boldsymbol A}^{-1} = ({\Det({\boldsymbol A})})^{-1}\adj({\boldsymbol A})$.
Conveniently, the tridiagonal structure of ${\boldsymbol M}(s)$ enables efficient and stable computation of both its determinant and adjugate via well-established recursive algorithms~\cite{press2007numerical}. Consequently, ${\boldsymbol \Phi}(s)$ can be determined once the unknown idle probability vector ${\boldsymbol p}$ is found, which we obtain next by enforcing the appropriate boundary conditions.

\begin{remark}
The choice of truncation level $\nmax$ represents a fundamental trade-off between model accuracy and computational cost.
A practical and robust guideline is to select $\nmax$ such that the probability of exceeding this level in the corresponding \emph{unbounded} system is negligible.
The number of customers in the untruncated auxiliary queue (which is the $\mathrm{M/M/}\infty$ queue) follows a Poisson distribution with mean $\bar\rho = \lambda/\gamma$. 
Therefore, $\nmax$ should be chosen to satisfy $\sum_{k=\nmax+1}^{\infty} {e^{-\bar\rho}\bar\rho^{k}}/{k!} < \epsilon$ for a small tolerance $\epsilon$, e.g., $10^{-6}$.
\end{remark}

\subsection{Boundary Conditions and Idle Probabilities}
\label{subsec:exp:boundary}

The solution in~\eqref{eq:exp:phi_sol}, which depends on the unknown idle probability vector ${\boldsymbol p} = [P_0, \dots, P_{\nmax}]^{\transpose}$, requires us to determine these $\nmax+1$ values. This is done by exploiting fundamental properties of LSTs and {truncation of} matrix-analytic methods. Specifically, for ${\boldsymbol \Phi}(s)$ to represent valid probability distributions, each component $\widetilde{\phi}_n(s)$ must be analytic in the open right half-plane, i.e., for $\RealPart(s) > 0$. From~\eqref{eq:exp:phi_sol}, the roots of the characteristic equation, $\Det({\boldsymbol M}(s)) = 0$, correspond to potential poles of ${\boldsymbol \Phi}(s)$. Analyticity requires that these poles be canceled by corresponding zeros in the numerator~\cite{40f1d0b7be7a48c9a05a521521c7d38f}.

The system under study is a classic example of a Markov Additive Process (MAP) with one-sided jumps, where the background Markov chain governs the number of $\CS$ customers in $\mathrm{Q_A}$.
The matrix ${\boldsymbol M}(s)$ is the Laplace exponent of this MAP.
The powerful results of~\cite{IVANOVS20101776} provide a rigorous foundation for this analysis, proving that under the stability condition $\rho < 1$, the characteristic equation $\Det({\boldsymbol M}(s)) = 0$ has precisely $\nmax + 1$ roots in the closed right half-plane, $\RealPart(s) \geq 0$.

One of these roots is always located at the origin, i.e., $s_0 = 0$. 
For the remaining $\nmax$ roots, denoted as $s_1, s_2, \dots, s_{\nmax}$, the analyticity requirement imposes the following $\nmax$ conditions:
\begin{equation}
    \label{eq:exp:analyticity}
    s_k \adj\big({\boldsymbol M}(s_k)\big) {\boldsymbol p} = \mathbf{0}, \quad \text{for } k = 1, 2, \dots, \nmax.
\end{equation}
For any non-zero root $s_k$, the matrix ${\boldsymbol M}(s_k)$ is singular by definition.
If the roots are distinct, the rank of ${\boldsymbol M}(s_k)$ is $\nmax$, which implies that the rank of its adjugate is 1.
Consequently, all rows of $\adj({\boldsymbol M}(s_k))$ are linearly dependent, and each root $s_k$ provides exactly one independent linear constraint on the elements of ${\boldsymbol p}$. 
Thus, the $\nmax$ roots yield $\nmax$ homogeneous equations. Since ${\boldsymbol p}$ has $n_{\max}+1$ components, one additional independent condition is required to determine a unique solution; this condition is obtained from the boundary behavior at $s=0$ (see~\cite{neuts1981matrix}).

The final independent equation is derived from the system behavior at $s=0$.
Differentiating the main matrix equation~\eqref{eq:exp:matrix_sys} with respect to $s$ gives
\begin{equation*}
    {\boldsymbol M}'(s) {\boldsymbol \Phi}(s) + {\boldsymbol M}(s) {\boldsymbol \Phi}'(s) = {\boldsymbol p}.
\end{equation*}
Evaluating at $s=0$ yields a linear system for the unknown derivative vector ${\boldsymbol \Phi}'(0)$:
\begin{equation}
    \label{eq:exp:diff_zero}
    {\boldsymbol M}(0) {\boldsymbol \Phi}'(0) = {\boldsymbol p} - {\boldsymbol M}'(0) {\boldsymbol \Phi}(0).
\end{equation}
Crucially, the vector ${\boldsymbol \Phi}(0) = [\widetilde{\phi} _0(0), \dots, \widetilde{\phi} _{\nmax}(0)]^{\transpose}$ is known.
Since $\widetilde{\phi} _n(0) = \E[e^{-0 \cdot V} \mathbbm{1}_{\{N_{\mathrm{A}} = n\}}] = \Prob  (N_{\mathrm{A}} = n)$, which is the marginal steady-state probability distribution of the number of $\CS$ customers in $\mathrm{Q_A}$.
This process $\{N_{\mathrm{A}}(t)\}$ evolves as a birth-death process on $\{0, \dots, \nmax\}$ with birth rate $\lambda$ and state-dependent death rate $n\gamma$.
This is an $\mathrm{M/M/}\nmax/\nmax$ loss system with load $\bar\rho = \lambda / \gamma$, whose stationary distribution is 
\begin{equation}
    \label{eq:exp:qa_dist}
    \widetilde{\phi} _n(0) = \Prob  (N_{\mathrm{A}} = n) = \frac{\bar\rho^n / n!}{\sum_{k=0}^{\nmax} \bar\rho^k / k!},
\end{equation}
for $ n = 0, 1, \dots, \nmax$.
The matrix ${\boldsymbol M}(0)$ is singular, as it is the transpose of the infinitesimal generator matrix for the underlying finite-state birth-death process governing $N_{\mathrm{A}}(t)$.
For such a generator matrix, the sum of each row is zero, implying that the column vector of all ones is a right null vector.
Consequently, for ${\boldsymbol M}(0)$, the row vector of all ones, $\mathbf{1}^{\transpose} = [1, 1, \dots, 1]$, is its left null-vector corresponding to the eigenvalue $0$, i.e., $\mathbf{1}^{\transpose}{\boldsymbol M}(0) = \mathbf{0}^{\transpose}$.
For the linear system~\eqref{eq:exp:diff_zero} to have a solution for ${\boldsymbol \Phi}'(0)$, the right-hand side must be orthogonal to this left null vector.
This solvability condition yields the following final equation:
\begin{equation}
    \label{eq:exp:norm_cond}
    \mathbf{1}^{\transpose} \big({\boldsymbol p} - {\boldsymbol M}'(0) {\boldsymbol \Phi}(0)\big) = 0 \implies \mathbf{1}^{\transpose} {\boldsymbol p} = \mathbf{1}^{\transpose} {\boldsymbol M}'(0) {\boldsymbol \Phi}(0).
\end{equation}
The vector ${\boldsymbol \Phi}(0)$ is the known steady-state probability distribution of the number of customers in $\mathrm{Q_A}$, which evolves as an $\mathrm{M/M/}\nmax/\nmax$ loss system.
Combining the $\nmax$ equations from~\eqref{eq:exp:analyticity} with~\eqref{eq:exp:norm_cond} forms a full-rank system that uniquely determines ${\boldsymbol p}$. 
Having fully characterized the system state LSTs, we now proceed to extract the performance metrics, beginning with the workload moments.

\subsection{Workload Moments}
\label{subsec:exp:moments}

With the idle probability vector ${\boldsymbol p}$ determined, the LST vector ${\boldsymbol \Phi}(s)$ is fully specified.
We can now compute the key performance metrics, starting with the moments of the steady-state workload, $V$.
The moments are obtained from the derivatives of the total workload LST, $\E[e^{-sV}] = \mathbf{1}^{\transpose}{\boldsymbol \Phi}(s)$, evaluated at $s=0$:
\begin{equation*}
    \E[V_{\mathrm{M}}^k] = (-1)^k \frac{\diff ^k}{\diff s^k} \E[e^{-sV_{\mathrm{M}}}] \bigg|_{s=0} = (-1)^k \mathbf{1}^{\transpose} {\boldsymbol \Phi}^{(k)}(0).
\end{equation*}
Our task, therefore, is to compute these derivative vectors.

\subsubsection{Mean Workload}
The mean workload is given by $\E[V_{\mathrm{M}}] = -\mathbf{1}^{\transpose} {\boldsymbol \Phi}'(0)$. The vector of first derivatives, ${\boldsymbol \Phi}'(0)$, is the solution to the linear system~\eqref{eq:exp:diff_zero} derived in the previous section. However, the primary challenge in solving~\eqref{eq:exp:diff_zero} is that the coefficient matrix ${\boldsymbol M}(0)$ is singular. This implies that the system does not have a unique solution without an additional constraint. The required constraint is obtained by differentiating the main matrix equation~\eqref{eq:exp:matrix_sys} twice with respect to $s$:
\begin{equation*}
    {\boldsymbol M}''(s) {\boldsymbol \Phi}(s) + 2 {\boldsymbol M}'(s) {\boldsymbol \Phi}'(s) + {\boldsymbol M}(s) {\boldsymbol \Phi}''(s) = \mathbf{0}.
\end{equation*}
The key insight is to evaluate this equation at $s=0$ and left-multiply by the left null vector of ${\boldsymbol M}(0)$, which is $\mathbf{1}^{\transpose}$. Since $\mathbf{1}^{\transpose}{\boldsymbol M}(0) = \mathbf{0}^{\transpose}$, this step elegantly eliminates the unknown second derivative term ${\boldsymbol \Phi}''(0)$:
\begin{equation} \label{eq:exp:moments_constraint}
    \mathbf{1}^{\transpose} \big( {\boldsymbol M}''(0) {\boldsymbol \Phi}(0) + 2{\boldsymbol M}'(0) {\boldsymbol \Phi}'(0) \big) = 0.
\end{equation}
By replacing any one of the linearly dependent equations in the singular system~\eqref{eq:exp:diff_zero} with this new constraint~\eqref{eq:exp:moments_constraint}, we obtain an augmented system with full rank that can be uniquely solved for ${\boldsymbol \Phi}'(0)$. The mean workload is then readily computed as $\E[V_{\mathrm{M}}] = -\mathbf{1}^{\transpose}{\boldsymbol \Phi}'(0)$.

\subsubsection{Higher Moments}
This differentiation-based approach can be systematically extended to determine any higher moment. For instance, the second moment is $\E[V^2] = \mathbf{1}^{\transpose}{\boldsymbol \Phi}''(0)$. To find the vector ${\boldsymbol \Phi}''(0)$, we first rearrange the second-derivative equation evaluated at $s=0$:
\begin{equation*}
    {\boldsymbol M}(0){\boldsymbol \Phi}''(0) = - {\boldsymbol M}''(0){\boldsymbol \Phi}(0) - 2{\boldsymbol M}'(0){\boldsymbol \Phi}'(0). 
\end{equation*}
The right-hand side is now fully determined, as ${\boldsymbol \Phi}'(0)$ was computed in the previous step. This remains a singular system, so to obtain the additional constraint, we differentiate the main matrix equation a third time, evaluate at $s=0$, and left-multiply by $\mathbf{1}^{\transpose}$ to eliminate ${\boldsymbol \Phi}'''(0)$. The resulting full-rank system can then be uniquely solved for ${\boldsymbol \Phi}''(0)$.

This framework can be applied iteratively to compute moments of any order, with the $k$-th moment depending on all lower moments from $0$ to $k-1$. While workload moments characterize the system state, they do not directly capture individual customer waiting times --- a distinction we address in the next subsection.

\subsection{Class-Specific Waiting and System Times}
\label{subsec:exp:metrics}

With the system workload fully characterized by the LST vector ${\boldsymbol \Phi}(s)$ and the idle probability vector ${\boldsymbol p}$, we can now derive key performance metrics for each customer class, focusing on the distributions of waiting time, $W$, and system time, $S$. The analytical approach differs between classes due to the nature of their arrivals. A central element is the PASTA property~\cite{Wolff1982Poisson}, which asserts that Poisson arrivals observe the system in steady state. This relies on the \emph{lack of anticipation} property: the occurrence of an arrival at time $t$ reveals no information about the system state immediately before the arrival. To apply the PASTA property correctly, we must refine the notion of independence underlying it, particularly when multiple interacting arrival streams are present. Following the definition from our work~\cite{11571174}.

\begin{definition}[\emph{Causally independent} Poisson arrival process] \label{def:exp:causal_indep}
  An arrival process for a class of customers is a \emph{causally independent Poisson process} if it is a Poisson process whose arrival epochs are statistically independent of the history and current state of the queueing system, including the arrival times of the other customer classes that interact with the system.
\end{definition}

This definition emphasizes independence not only from system-state evolution (such as the number of customers in the system or server status) but also from the timing of other events (such as arrivals of other classes) that directly affect system evolution up to the arrival instant. Using this definition, we assess the applicability of PASTA in our M$^{\text{T}}$/G/1 model:

\emph{Primary Customers}: The arrival process of $\CP$ customers is, by definition, a homogeneous Poisson process. These arrivals are generated externally and independently of the state of the queue, the service process, and the arrival times of the $\CS$ customers. This perfectly matches the criteria of a \emph{causally independent} Poisson process. Therefore, the PASTA property holds: Arriving $\CP$ customers see the queue in its steady-state distribution.

\emph{Secondary Customers}: From Lemma~\ref{lem:model:arrival_props}, we know that although the marginal arrival process of $\CS$ customers is Poisson, it is \emph{not} causally independent. Each $\CS$ arrival epoch is explicitly determined by a past $\CP$ arrival time plus a random delay $Y$. This dependency violates the lack of anticipation condition. The arrival of a $\CP$ necessarily alters the system state $Y$ time units \emph{before} the corresponding $\CS$ arrives. Thus, knowing a $\CS$ arrival implies knowledge about a related past event (a $\CP$ arrival) that influenced the trajectory of the queue up to the $\CS$ arrival instant. Therefore, PASTA does \emph{not} directly apply: Arriving $\CS$ customers do not necessarily see the queue in its steady-state distribution.

\subsubsection{Performance of Primary Customers}

As established by PASTA, an arriving $\CP$ customer sees the system in its steady-state distribution. Therefore, the waiting time that a primary customer experiences, $W_{\CP}$, is equal in distribution to the steady-state workload, $V_{\mathrm{M}}$. The LST of the waiting time for a $\CP$ customer, denoted by $\widetilde{W} _{\CP}(s)$, is the LST of the total workload, obtained by summing the components of our solution vector ${\boldsymbol \Phi}(s)$:
\begin{equation*}
   \widetilde{W} _{\CP}(s) \coloneqq \E[e^{-s W_{\CP}}] = \E[e^{-sV_{\mathrm{M}}}] = \sum_{n=0}^{\nmax} \widetilde{\phi} _n(s) = \mathbf{1}^{\transpose}{\boldsymbol \Phi}(s).
\end{equation*}
The moments of the waiting time are found by differentiating this LST and evaluating at $s=0$. Specifically, the mean and second moments are
\begin{align*}
    \E[W_{\CP}] &= \E[V_{\mathrm{M}}] = -\mathbf{1}^{\transpose}{\boldsymbol \Phi}'(0) ,\\
    \E[W_{\CP}^2] &= \E[V_{\mathrm{M}}^2] = \mathbf{1}^{\transpose}{\boldsymbol \Phi}''(0).
\end{align*}
The mean system time, $\E[S_{\CP}]$, is the sum of the mean waiting and service times: $\E[S_{\CP}] = \E[W_{\CP}] + \E[X_{\CP}]$.
Since the service time is independent of the waiting time, the variance of the system time is the sum of the variances:
$\var[S_{\CP}] = \var[W_{\CP}] + \var[X_{\CP}]$, where $\var[W_{\CP}] = \E[W_{\CP}^2] - (\E[W_{\CP}])^2$.

\subsubsection{Performance of Secondary Customers}

Since PASTA does not apply, we must determine the state distribution seen by an arriving $\CS$ using an alternative method. The appropriate tool is the Palm distribution~\cite{baccelli2013elements}, which formalizes the idea that the probability of an arrival seeing the system in a particular state is proportional to the rate of arrivals into that state.

A $\CS$ arrives at $\mathrm{Q_M}$ when its triggering delay in $\mathrm{Q_A}$ is completed. If there are $n$ customers in $\mathrm{Q_A}$, the total rate of departures from $\mathrm{Q_A}$ (and thus arrivals at $\mathrm{Q_M}$) is $n\gamma$. The arrival-stationary probability of seeing $n$ customers in $\mathrm{Q_A}$ is therefore
\begin{equation*}
    \begin{split}
            \Prob   \big(N_{\mathrm{A}}=n \mid& \text{arrival of } \CS \text{ at } \mathrm{Q_M}\big)  
            =\\ &\frac{ \text{Arrival rate of } \CS \text{ at } \mathrm{Q_M} \text{ when } N_{\mathrm{A}}=n}{\text{Total arrival rate of } \CS \text{ at } \mathrm{Q_M}}\\ 
            \qquad&= \frac{n\gamma \cdot \Prob  (N_{\mathrm{A}}=n)}{\sum_{k=1}^{\nmax} k\gamma \cdot \Prob  (N_{\mathrm{A}}=k)}\\&= \frac{n  \widetilde{\phi} _n(0)}{\E[N_{\mathrm{A}}]},
    \end{split}
\end{equation*}
for $n \ge 1$, where $\E[N_{\mathrm{A}}] = \sum_{k=1}^{\nmax} k\widetilde{\phi} _k(0)$.
The LST of the waiting time for a $\CS$ customer, $\widetilde{W} _{\CS}(s)$, is the expected value of $e^{-sV_{\mathrm{M}}}$ conditioned on this arrival-stationary distribution:
\begin{align*}
    \widetilde{W} _{\CS}(s) &\coloneqq \E[e^{-s W_{\CS}}]\\&= \sum_{n=1}^{\nmax} \E  \left[e^{-sV_{\mathrm{M}}}\! \mid \!N_{\mathrm{A}}=n\right] \Prob    \big(N_{\mathrm{A}}=n \!\mid \!\text{arrival of } \CS\big) \nonumber \\
    &= \sum_{n=1}^{\nmax} \frac{\E[e^{-sV_{\mathrm{M}}} \mathbbm{1}_{\{N_{\mathrm{A}}=n\}}]}{\Prob  (N_{\mathrm{A}}=n)} \cdot \frac{n  \Prob  (N_{\mathrm{A}}=n)}{\E[N_{\mathrm{A}}]} \\&= \frac{1}{\E[N_{\mathrm{A}}]} \sum_{n=1}^{\nmax} n  \widetilde{\phi} _n(s).
\end{align*}
The moments of the waiting time for $\CS$ customers are derived by differentiating this LST:
\begin{align*}
    \E[W_{\CS}] &= -\frac{\diff }{\diff s}\widetilde{W} _{\CS}(s)\bigg|_{s=0} = \frac{-\sum_{n=1}^{\nmax} n\widetilde{\phi}^{\prime}_n(0)}{\E[N_{\mathrm{A}}]}, \\
    \E[W_{\CS}^2] &= \frac{\diff ^2}{\diff s^2}\widetilde{W} _{\CS}(s)\bigg|_{s=0} = \frac{\sum_{n=1}^{\nmax} n\widetilde{\phi}^{\prime \prime}_n(0)}{\E[N_{\mathrm{A}}]}.
\end{align*}
These results are consistent with the previously derived formulas and are now explicitly expressed in terms of the LST of the waiting time distribution. The mean system time and variance for $\CS$ customers are given by $\E[S_{\CS}] = \E[W_{\CS}] + \E[X_{\CS}]$ and $\var[S_{\CS}] = \var[W_{\CS}] + \var[X_{\CS}]$, respectively. 
This completes the derivation of the main performance metrics for both customer classes under the assumption of exponential triggering delays.

\section{Numerical Results}
\label{sec:numerical}

\begin{table*}[t]
\centering
\caption{Absolute relative error (\%) between analytical and simulation results for the exponential triggering-delay assumption.}
\label{tab:comprehensive_errors}
\footnotesize
\setlength{\tabcolsep}{4.2pt}
\begin{tabular}{lcccccccccccccccccc}
\toprule
\rowcolor{gray!20}
\textbf{Metric} & \multicolumn{6}{c}{\textbf{$\rho=0.30$}} & \multicolumn{6}{c}{\textbf{$\rho=0.60$}} & \multicolumn{6}{c}{\textbf{$\rho=0.90$}} \\
\cmidrule(lr){2-7} \cmidrule(lr){8-13} \cmidrule(lr){14-19}
\rowcolor{gray!10}
$c^2_X$ & \multicolumn{2}{c}{$0.25$} & \multicolumn{2}{c}{$1.00$} & \multicolumn{2}{c}{$2.25$} & \multicolumn{2}{c}{$0.25$} & \multicolumn{2}{c}{$1.00$} & \multicolumn{2}{c}{$2.25$} & \multicolumn{2}{c}{$0.25$} & \multicolumn{2}{c}{$1.00$} & \multicolumn{2}{c}{$2.25$} \\
\cmidrule(lr){2-3} \cmidrule(lr){4-5} \cmidrule(lr){6-7} \cmidrule(lr){8-9} \cmidrule(lr){10-11} \cmidrule(lr){12-13} \cmidrule(lr){14-15} \cmidrule(lr){16-17} \cmidrule(lr){18-19}
\rowcolor{gray!10}
$\E[Y]$ & $1.00$ & $10.00$ & $1.00$ & $10.00$ & $1.00$ & $10.00$ & $1.00$ & $10.00$ & $1.00$ & $10.00$ & $1.00$ & $10.00$ & $1.00$ & $10.00$ & $1.00$ & $10.00$ & $1.00$ & $10.00$ \\
\midrule
$\E[S_{\CP}]$  & 0.03 & 0.03 & 0.05 & 0.00 & 0.02 & 0.01 & 0.07 & 0.03 & 0.09 & 0.02 & 0.02 & 0.07 & 0.19 & 0.67 & 0.31 & 0.07 & 0.24 & 0.02 \\
$\var[S_{\CP}]$ & 0.09 & 0.14 & 0.28 & 0.02 & 0.24 & 0.02 & 0.33 & 0.09 & 0.12 & 0.21 & 0.08 & 0.29 & 0.31 & 1.08 & 0.22 & 0.15 & 1.26 & 0.69 \\
$\E[S_{\CS}]$  & 0.03 & 0.03 & 0.03 & 0.00 & 0.02 & 0.01 & 0.07 & 0.03 & 0.08 & 0.02 & 0.02 & 0.07 & 0.19 & 0.64 & 0.31 & 0.07 & 0.23 & 0.02 \\
$\var[S_{\CS}]$ & 0.01 & 0.14 & 0.17 & 0.10 & 0.24 & 0.02 & 0.47 & 0.09 & 0.12 & 0.21 & 0.08 & 0.26 & 0.31 & 1.08 & 0.23 & 0.15 & 1.24 & 0.69 \\
\bottomrule
\end{tabular}
\vspace{1mm}

\parbox{0.98\textwidth}{\footnotesize
For a performance metric $M \in \{\E[S_{\CP}],\, \var[S_{\CP}],\, \E[S_{\CS}],\, \var[S_{\CS}]\}$, the relative error is defined as $|M^{\mathrm{Analytical}}-M^{\mathrm{Simulation}}|/ {M^{\mathrm{Simulation}}}\times 100\%$. The service-time distributions of $\CP$ and $\CS$ customers are assumed to be identical, with mean $\E[X]=1.00$ and squared coefficient of variation $c^2_X\in\{0.25,1.00,2.25\}$. The triggering delay has mean $\E[Y]\in\{1.00,10.00\}$ with squared coefficient of variation $c^2_Y=1.00$ (exponential distribution).}
\end{table*}

\begin{figure*}[t]
\centering

\begin{tikzpicture}
\begin{groupplot}[
    group style={
        group size=2 by 2,
        horizontal sep=1.1cm,
        vertical sep=0.7cm
    },
    width=0.49\textwidth,
    height=5.7cm,
    grid=both,
    grid style={line width=.2pt, draw=gray!25},
    major grid style={line width=.3pt, draw=gray!40},
    axis line style={line width=1pt},
    tick style={line width=1pt, color=black},
    xlabel near ticks,
    ylabel near ticks,
    xlabel shift=-3pt,
    ylabel shift=-4pt,
    title style={
        yshift=-9pt,
        font=\footnotesize\bfseries
    },
    tick label style={font=\footnotesize},
    label style={font=\footnotesize},
    legend style={
        font=\footnotesize,
        legend columns=4,
        fill=white,
        fill opacity=0.96,
        draw=gray,
        column sep=7pt,
        row sep=1pt
    }
]

% =====================================================================
% (a) Mean system time versus service-time SCV (rho = 0.40)
% =====================================================================
\nextgroupplot[
    ylabel={$\E[S]$},
    title={(a) Mean System Time ($\rho=0.40$)},
    xmin=0,
    xmax=4.2,
    ymin=1.0,
    ymax=4.0,
    legend to name=sharedlegend
]

% Exponential-service benchmark line
\draw[dotted, line width=0.8pt, draw=gray!60] (axis cs:1,1.0) -- (axis cs:1,4.0);

% Simulation: primary customers
\addplot[
    only marks,
    mark=square*,
    mark size=2.0pt,
    draw=blue!90!black,
    fill=blue!40
] coordinates {
    (0.10,1.56) (0.25,1.62) (0.50,1.71) (0.75,1.80)
    (1.00,1.89) (1.25,1.98) (1.50,2.07) (1.75,2.15)
    (2.00,2.23) (2.25,2.32) (2.50,2.41) (2.75,2.49)
    (3.00,2.57) (3.25,2.66) (3.50,2.74) (3.75,2.83)
    (4.00,2.91)
};
\addlegendentry{Sim. $\CP$}

% Simulation: secondary customers
\addplot[
    only marks,
    mark=triangle*,
    mark size=2.3pt,
    draw=red!90!black,
    fill=red!40
] coordinates {
    (0.10,2.16) (0.25,2.23) (0.50,2.35) (0.75,2.47)
    (1.00,2.57) (1.25,2.67) (1.50,2.76) (1.75,2.85)
    (2.00,2.94) (2.25,3.03) (2.50,3.13) (2.75,3.21)
    (3.00,3.30) (3.25,3.39) (3.50,3.47) (3.75,3.56)
    (4.00,3.65)
};
\addlegendentry{Sim. $\CS$}

% Proposed model: primary customers
\addplot[
    line width=1.4pt,
    solid,
    color=blue!85!black
] coordinates {
    (0.10,1.56) (0.25,1.62) (0.50,1.71) (0.75,1.80)
    (1.00,1.89) (1.25,1.98) (1.50,2.06) (1.75,2.15)
    (2.00,2.24) (2.25,2.32) (2.50,2.41) (2.75,2.49)
    (3.00,2.57) (3.25,2.66) (3.50,2.74) (3.75,2.83)
    (4.00,2.91)
};
\addlegendentry{Proposed $\CP$}

% Proposed model: secondary customers
\addplot[
    line width=1.4pt,
    dash pattern=on 6pt off 2.5pt,
    color=red!85!black
] coordinates {
    (0.10,2.16) (0.25,2.23) (0.50,2.35) (0.75,2.46)
    (1.00,2.57) (1.25,2.67) (1.50,2.76) (1.75,2.85)
    (2.00,2.94) (2.25,3.03) (2.50,3.12) (2.75,3.21)
    (3.00,3.30) (3.25,3.39) (3.50,3.47) (3.75,3.56)
    (4.00,3.65)
};
\addlegendentry{Proposed $\CS$}

% Prior-work legend entries
\addlegendimage{
    only marks,
    mark=diamond*,
    mark size=3.75pt,
    draw=black,
    fill=cyan!75!black
}
\addlegendentry{\cite{11571174} $\CP$}

\addlegendimage{
    only marks,
    mark=pentagon*,
    mark size=2.75pt,
    draw=black,
    fill=orange!90!black
}
\addlegendentry{\cite{11571174} $\CS$}

\addlegendimage{
    only marks,
    mark=star,
    mark size=3.8pt,
    line width=0.8pt,
    draw=red
}
\addlegendentry{
    \cite{Rahnamania2024CorrelatedArrivals} $\CP$
}

\addlegendimage{
    only marks,
    mark=+,
    mark size=3.5pt,
    draw=black,
    fill=purple!85!black
}
\addlegendentry{
    \cite{Rahnamania2024CorrelatedArrivals} $\CS$
}

% Prior [15] (INFOCOM)
\addplot[
    only marks,
    mark=diamond*,
    mark size=3.75pt,
    draw=black,
    fill=cyan!75!black
] coordinates {
    (0.985,1.89)
};

\addplot[
    only marks,
    mark=pentagon*,
    mark size=2.75pt,
    draw=black,
    fill=orange!90!black
] coordinates {
    (1.015,2.57)
};

% Prior [14] (IWCIT)
\addplot[
    only marks,
    mark=star,
    mark size=3.8pt,
    line width=0.8pt,
    draw=red
] coordinates {
    (0.985,1.88)
};

\addplot[
    only marks,
    mark=+,
    mark size=3.75pt,
    draw=black
] coordinates {
    (1.015,2.54)
};

% =====================================================================
% (b) Variance of system time versus service-time SCV (rho = 0.40)
% =====================================================================
\nextgroupplot[
    ylabel={$\var[S]$},
    title={(b) Variance of System Time ($\rho=0.40$)},
    xmin=0,
    xmax=4.2,
    ymin=0,
    ymax=32
]

\draw[dotted, line width=0.8pt, draw=gray!60] (axis cs:1,0) -- (axis cs:1,32);

% Simulation CP
\addplot[
    only marks,
    mark=square*,
    mark size=2.0pt,
    draw=blue!90!black,
    fill=blue!40
] coordinates {
    (0.10,1.23) (0.25,1.66) (0.50,2.42) (0.75,3.25)
    (1.00,4.19) (1.25,5.29) (1.50,6.51) (1.75,7.83)
    (2.00,9.17) (2.25,10.70) (2.50,12.36) (2.75,14.01)
    (3.00,15.74) (3.25,17.68) (3.50,19.66) (3.75,21.73)
    (4.00,23.94)
};

% Simulation CS
\addplot[
    only marks,
    mark=triangle*,
    mark size=2.3pt,
    draw=red!90!black,
    fill=red!40
] coordinates {
    (0.10,1.77) (0.25,2.33) (0.50,3.31) (0.75,4.35)
    (1.00,5.53) (1.25,6.90) (1.50,8.37) (1.75,9.97)
    (2.00,11.57) (2.25,13.36) (2.50,15.31) (2.75,17.24)
    (3.00,19.26) (3.25,21.48) (3.50,23.73) (3.75,26.06)
    (4.00,28.57)
};

% Proposed CP
\addplot[
    line width=1.4pt,
    solid,
    color=blue!85!black
] coordinates {
    (0.10,1.23) (0.25,1.65) (0.50,2.42) (0.75,3.25)
    (1.00,4.18) (1.25,5.29) (1.50,6.49) (1.75,7.80)
    (2.00,9.20) (2.25,10.70) (2.50,12.30) (2.75,13.99)
    (3.00,15.79) (3.25,17.68) (3.50,19.66) (3.75,21.75)
    (4.00,23.93)
};

% Proposed CS
\addplot[
    line width=1.4pt,
    dash pattern=on 6pt off 2.5pt,
    color=red!85!black
] coordinates {
    (0.10,1.77) (0.25,2.32) (0.50,3.31) (0.75,4.35)
    (1.00,5.52) (1.25,6.89) (1.50,8.36) (1.75,9.93)
    (2.00,11.60) (2.25,13.38) (2.50,15.25) (2.75,17.22)
    (3.00,19.29) (3.25,21.45) (3.50,23.72) (3.75,26.08)
    (4.00,28.54)
};

% Prior [15] Variance
\addplot[
    only marks,
    mark=diamond*,
    mark size=3.75pt,
    draw=black,
    fill=cyan!75!black
] coordinates {
    (0.985,4.03)
};

\addplot[
    only marks,
    mark=pentagon*,
    mark size=2.75pt,
    draw=black,
    fill=orange!90!black
] coordinates {
    (1.015,4.92)
};

% =====================================================================
% (c) Mean system time versus service-time SCV (rho = 0.80)
% =====================================================================
\nextgroupplot[
    xlabel={$c_X^2$},
    ylabel={$\E[S]$},
    title={(c) Mean System Time ($\rho=0.80$)},
    xmin=0,
    xmax=4.2,
    ymin=3.0,
    ymax=15.5
]

\draw[dotted, line width=0.8pt, draw=gray!60] (axis cs:1,3.0) -- (axis cs:1,15.5);

% Simulation CP
\addplot[
    only marks,
    mark=square*,
    mark size=2.0pt,
    draw=blue!90!black,
    fill=blue!40
] coordinates {
    (0.10,4.90) (0.25,5.20) (0.50,5.72) (0.75,6.28)
    (1.00,6.77) (1.25,7.27) (1.50,7.80) (1.75,8.27)
    (2.00,8.83) (2.25,9.33) (2.50,9.79) (2.75,10.28)
    (3.00,10.81) (3.25,11.33) (3.50,11.85) (3.75,12.26)
    (4.00,12.92)
};

% Simulation CS
\addplot[
    only marks,
    mark=triangle*,
    mark size=2.3pt,
    draw=red!90!black,
    fill=red!40
] coordinates {
    (0.10,5.76) (0.25,6.06) (0.50,6.59) (0.75,7.16)
    (1.00,7.65) (1.25,8.16) (1.50,8.69) (1.75,9.16)
    (2.00,9.73) (2.25,10.23) (2.50,10.69) (2.75,11.18)
    (3.00,11.71) (3.25,12.24) (3.50,12.76) (3.75,13.17)
    (4.00,13.83)
};

% Proposed CP
\addplot[
    line width=1.4pt,
    solid,
    color=blue!85!black
] coordinates {
    (0.10,4.91) (0.25,5.22) (0.50,5.74) (0.75,6.26)
    (1.00,6.77) (1.25,7.28) (1.50,7.78) (1.75,8.29)
    (2.00,8.79) (2.25,9.30) (2.50,9.80) (2.75,10.30)
    (3.00,10.80) (3.25,11.31) (3.50,11.81) (3.75,12.31)
    (4.00,12.81)
};

% Proposed CS
\addplot[
    line width=1.4pt,
    dash pattern=on 6pt off 2.5pt,
    color=red!85!black
] coordinates {
    (0.10,5.76) (0.25,6.08) (0.50,6.61) (0.75,7.14)
    (1.00,7.66) (1.25,8.17) (1.50,8.67) (1.75,9.18)
    (2.00,9.69) (2.25,10.19) (2.50,10.70) (2.75,11.20)
    (3.00,11.71) (3.25,12.21) (3.50,12.71) (3.75,13.21)
    (4.00,13.72)
};

% Prior [15] (INFOCOM)
\addplot[
    only marks,
    mark=diamond*,
    mark size=3.75pt,
    draw=black,
    fill=cyan!75!black
] coordinates {
    (0.985,6.77)
};

\addplot[
    only marks,
    mark=pentagon*,
    mark size=2.75pt,
    draw=black,
    fill=orange!90!black
] coordinates {
    (1.015,7.66)
};

% Prior [14] (IWCIT)
\addplot[
    only marks,
    mark=star,
    mark size=3.8pt,
    line width=0.8pt,
    draw=red
] coordinates {
    (0.985,6.74)
};

\addplot[
    only marks,
    mark=+,
    mark size=3.75pt,
    draw=black
] coordinates {
    (1.015,7.61)
};

% =====================================================================
% (d) Variance of system time versus service-time SCV (rho = 0.80)
% =====================================================================
\nextgroupplot[
    xlabel={$c_X^2$},
    ylabel={$\var[S]$},
    title={(d) Variance of System Time ($\rho=0.80$)},
    xmin=0,
    xmax=4.2,
    ymin=0,
    ymax=270
]

\draw[dotted, line width=0.8pt, draw=gray!60] (axis cs:1,0) -- (axis cs:1,270);

% Simulation CP
\addplot[
    only marks,
    mark=square*,
    mark size=2.0pt,
    draw=blue!90!black,
    fill=blue!40
] coordinates {
    (0.10,22.81) (0.25,26.64) (0.50,34.01) (0.75,43.03)
    (1.00,51.23) (1.25,61.94) (1.50,75.04) (1.75,86.31)
    (2.00,102.38) (2.25,117.19) (2.50,130.22) (2.75,147.67)
    (3.00,165.10) (3.25,187.04) (3.50,205.72) (3.75,219.83)
    (4.00,254.57)
};

% Simulation CS
\addplot[
    only marks,
    mark=triangle*,
    mark size=2.3pt,
    draw=red!90!black,
    fill=red!40
] coordinates {
    (0.10,24.03) (0.25,28.02) (0.50,35.66) (0.75,44.96)
    (1.00,53.45) (1.25,64.45) (1.50,77.89) (1.75,89.47)
    (2.00,105.85) (2.25,121.01) (2.50,134.33) (2.75,152.14)
    (3.00,169.87) (3.25,192.17) (3.50,211.11) (3.75,225.55)
    (4.00,260.70)
};

% Proposed CP
\addplot[
    line width=1.4pt,
    solid,
    color=blue!85!black
] coordinates {
    (0.10,22.89) (0.25,26.96) (0.50,34.41) (0.75,42.58)
    (1.00,51.83) (1.25,62.54) (1.50,74.24) (1.75,86.95)
    (2.00,100.66) (2.25,115.36) (2.50,131.07) (2.75,147.78)
    (3.00,165.49) (3.25,184.20) (3.50,203.91) (3.75,224.62)
    (4.00,246.33)
};

% Proposed CS
\addplot[
    line width=1.4pt,
    dash pattern=on 6pt off 2.5pt,
    color=red!85!black
] coordinates {
    (0.10,24.10) (0.25,28.34) (0.50,36.07) (0.75,44.50)
    (1.00,54.04) (1.25,65.06) (1.50,77.09) (1.75,90.11)
    (2.00,104.14) (2.25,119.17) (2.50,135.20) (2.75,152.23)
    (3.00,170.26) (3.25,189.29) (3.50,209.33) (3.75,230.36)
    (4.00,252.39)
};

% Prior [15] Variance
\addplot[
    only marks,
    mark=diamond*,
    mark size=3.75pt,
    draw=black,
    fill=cyan!75!black
] coordinates {
    (0.985,49.39)
};

\addplot[
    only marks,
    mark=pentagon*,
    mark size=2.75pt,
    draw=black,
    fill=orange!90!black
] coordinates {
    (1.015,50.37)
};

\end{groupplot}

% Shared legend centered above the 2x2 grid
\node[
    anchor=south,
    inner sep=0pt
] at ($(group c1r1.north)!0.5!(group c2r1.north)+(0,0.58cm)$) {
    \pgfplotslegendfromname{sharedlegend}
};

\end{tikzpicture}
\caption{
Effect of service-time variability on the mean system time $\E[S]$ and system-time variance $\var[S]$ under load $\rho=0.40$ (top row) and $\rho=0.80$ (bottom row). Experiments use $\E[X_{\CP}]=\E[X_{\CS}]=1.00$ and $\E[Y]=1.00$, with $c_X^2\in[0.10,4.00]$. The proposed matrix-analytic model is compared with discrete-event simulation for primary and secondary customers. Prior approximations \cite{11571174,Rahnamania2024CorrelatedArrivals} are restricted to exponential service times (at $c_X^2=1.00$).}
\label{fig:numerical_scaling_grid}
\end{figure*}

In this section, we validate the accuracy of the truncation developed in Section~\ref{sec:exp} under the assumption that triggering delays follow an exponential distribution, by comparing its predictions against discrete-event simulations. 

All simulations are implemented in Python using the \texttt{SimPy} discrete-event engine and \texttt{NumPy} for generating random variables. To ensure high statistical confidence, each simulation run processes $2 \times 10^7$ customers, with the first and last $10^6$ events discarded to eliminate initialization and termination effects. For the analytical model, the truncation level $\nmax$ of the auxiliary queue is dynamically adjusted such that the tail probability satisfies $\Prob(N_{\mathrm{A}} > n_{\max}) < 10^{-9}$.

We consider the following distributional families for the random variables $X_{\CP}$ and $X_{\CS}$ in our analysis and simulations to systematically capture a wide range of squared coefficients of variation $c^2_X$:
\begin{itemize}
\item \emph{Deterministic} ($c^2_X = 0.00$): Used to model fixed service times.

\item \emph{Generalized Erlang} ($0.00 < c^2_X < 1.00$): Used to model service time distributions with less variability than the exponential distribution. We utilize a mixture of Erlang-$k$ and Erlang-$(k-1)$ distributions with the same scale parameter, fitting the parameters to match the first two moments as described in \cite[App.~B, pp.~444--446]{tijms2003first}.

\item \emph{Hyperexponential} ($c^2_X > 1.00$): Used to model service time distributions with more variability than the exponential distribution. We employ a two-phase hyperexponential distribution with balanced means, fitting the parameters to match the first two moments as described in \cite[App.~B, pp.~446--447]{tijms2003first}.
\end{itemize}
The triggering delay is always exponential, while the service times of $\CP$ and $\CS$ are drawn from the above families to achieve the desired coefficients of variation. 

We first verify the accuracy of our analytical model across a wide range of system parameters. The parameters span system loads $\rho \in \{0.30,\, 0.60,\, 0.90\}$, service time squared coefficients of variation $c^2_{X_{\CP}}=c^2_{X_{\CS}} \coloneqq c^2_{X}\in \{0.25,\, 1.00,\, 2.25\}$, mean service times $\E[X_{\CP}]=\E[X_{\CS}]=1.00$, and mean triggering delays $\E[Y] = \frac{1}{\gamma} \in \{1.00,\, 10.00\}$.  

Table~\ref{tab:comprehensive_errors} presents the absolute relative errors for the mean and variance of system times for both $\CP$ and $\CS$ customers. The relative error for a given performance metric 
$M\in\{\E[S_{\CP}], \var[S_{\CP}], \E[S_{\CS}], \var[S_{\CS}]\}$ is defined as
\[
    \frac{\left|M^{\mathrm{Analytical}}-M^{\mathrm{Simulation}}\right|}{M^{\mathrm{Simulation}}} \times 100\%.
\]

As demonstrated in Table~\ref{tab:comprehensive_errors}, the results confirm the high accuracy of the analytical model across the entire parameter space. The truncation strategy described in Section~\ref{sec:exp} (setting $\nmax$ so that $\Prob(N_{\mathrm{A}} > n_{\max}) < 10^{-9}$) introduces negligible distortion, with all relative errors remaining tightly bounded around the statistical noise of the simulation. Across all considered scenarios, the maximum observed truncation level required to satisfy this condition was $n_{\max} = 22$. This is mathematically expected: because the auxiliary queue $\mathrm{Q_A}$ operates as an $\mathrm{M/M/}\infty$ queue, its pending customer count follows a Poisson distribution whose tail decays factorially ($1/n!$). This confirms the numerical tractability of the approach, as the state space remains highly compact and fully solvable via standard linear algebra algorithms under the proposed formulation.

\subsection{Comparison with Existing Approximations}
\label{subsec:comparative_benchmarks}

To illustrate why accommodating generally distributed service times as developed in Section~\ref{sec:exp} is essential, we compare the matrix-analytic analysis of this paper against both prior analytical approximations \cite{Rahnamania2024CorrelatedArrivals} and \cite{11571174}. Both prior approaches assume a common exponential service-time distribution for the primary and secondary customers, and therefore correspond to the special case $c_X^2=1$. Importantly, the model \cite{Rahnamania2024CorrelatedArrivals} was derived exclusively for mean system times ($\E[S]$), whereas \cite{11571174} derived the LST of system times.

When the service times of both customer classes follow a common exponential distribution ($c_X^2=1.00$), the prior approximations provide reasonable estimates. However, Fig.~\ref{fig:numerical_scaling_grid} demonstrates that their accuracy deteriorates substantially when the service-time distribution deviates from the exponential case.

As shown in Fig.~\ref{fig:numerical_scaling_grid}~(a)--(d) for $\rho=0.40, 0.80$, the prior formulations~\cite{11571174,Rahnamania2024CorrelatedArrivals} cannot account for service-time variability because they are strictly restricted to the exponential case ($c_X^2=1.00$). Consequently, their predictions are depicted only at $c_X^2=1.00$. When applied to systems with general service times ($c_X^2 \neq 1.00$), these prior heuristics exhibit substantial discrepancies. In contrast, our matrix-analytic framework is accurate across $c_X^2 \in [0.10, 4.00]$.

These results demonstrate that using an exponential service-time model as an approximation for a non-exponential service-time distribution can lead to substantial inaccuracies, particularly for higher-order performance metrics such as the variance. In contrast, the proposed matrix-analytic framework accurately captures the effect of service-time variability while retaining the computational tractability of the finite-state formulation.
\section{Conclusion}
\label{sec:conclusion}

This paper introduced and analyzed the $\mathrm{M^T/G/1}$ queueing model for open-loop triggered packet streams. We established that the one-to-one causal dependence between primary and secondary packets results in a non-renewal aggregate arrival process for any non-degenerate triggering-delay distribution, precluding the direct application of conventional renewal-based queueing results.

For exponentially distributed triggering delays, the memoryless property enables a tractable two-dimensional Markovian representation based on the queue workload and the number of pending secondary packets. Exploiting this representation, we developed a finite-state matrix-analytic formulation in the Laplace--Stieltjes transform domain for general, class-dependent service-time distributions. The resulting framework characterizes the workload and the class-specific waiting- and system-time distributions, with PASTA applying to primary customers and Palm conditioning required for secondary customers.

Numerical experiments demonstrate close agreement between the analytical results and discrete-event simulations across a broad range of system loads, service-time variabilities, and triggering-delay means. In contrast to existing approximations restricted to exponential service times, the proposed framework accurately captures the impact of non-exponential service-time variability while remaining computationally tractable. Across the considered scenarios, the relative errors with respect to simulation remain very small, while the required truncation level remains modest.

The main limitation is the assumption of exponential triggering delays, which enables the finite-dimensional Markovian representation. Extending the analysis to generally distributed triggering delays would require residual-triggering-time information or suitable approximation schemes. Future work will therefore investigate stochastic bounds for system-time distributions under general triggering delays, extensions to multi-server systems, and information-freshness metrics such as the Age of Information. More broadly, the results demonstrate that explicitly accounting for causal dependencies between triggered packet transmissions is important for accurate performance evaluation of communication systems with causally dependent traffic.

\appendices

\section{Proof of Lemma~\ref{lem:model:arrival_props}}
\label{app:proof_arrival}

The lemma states three properties of the arrival process.  
Property~(i) is assumed in the text. Property~(ii) follows from the well-known fact that shifting the points of a Poisson process by i.i.d.\ random variables yields another Poisson process with the same rate~\cite{Kleinrock1975Volume1}.  
We provide here a detailed proof for property~(iii), that the aggregate arrival process is neither a Poisson nor a renewal process.

Let $N(t)$ denote the number of arrivals in the interval $(0,t]$ under \textit{stationarity}. We analyze the structure of this counting process by decomposing it into two independent components:

\begin{itemize}
    \item $N^\circ(t)$, the number of $\CS$ customers arriving in $(0,t]$ whose corresponding $\CP$ customers arrived before time $0$. The number of such pending $\CS$ customers at time $0$, say $N_0$, follows a Poisson distribution with parameter $\bar\rho \coloneqq \lambda \E[Y]$. The time until each of these customers arrives follows the residual life distribution of $Y$, denoted by $Y^{\mathrm{res}}$,  with complementary CDF
    \begin{equation*}
        \bar{F}_{Y}^{\mathrm{res}}(t) = \Prob(Y^{\mathrm{res}} > t) = \frac{1}{\E[Y]}\int_t^\infty \Prob(Y>s) \, \diff s.
    \end{equation*}
    \item $N^+(t)$, the total number of customers ($\CP$ and their corresponding $\CS$) that enter in $(0,t]$. The $\CS$ arrivals are originated from $\CP$ customers that arrive within $(0,t]$. The number of initial $\CP$ arrivals is Poisson with parameter $\lambda t$.
\end{itemize}
We first derive the Laplace--Stieltjes transform (LST) of $N^\circ(t)$. Conditioning on $N_0 = k$, each of the $k$ residual lifetimes independently expires in $(0,t]$ with probability $F_Y^{\mathrm{res}}(t)$. Hence, conditional on $N_0 = k$, $N^\circ(t)$ is binomial with parameters $(k, F_Y^{\mathrm{res}}(t))$, and
\begin{equation*}
    \E\bigl[e^{-s N^\circ(t)}\mid N_0=k\bigr]
    = \bigl(F_{Y}^{\mathrm{res}}(t)e^{-s} + \bar{F}_{Y}^{\mathrm{res}}(t)\bigr)^k .
\end{equation*}
Averaging over the Poisson($\bar\rho$) distributed $N_0$ yields
\begin{equation*}
    \E\bigl[e^{-s N^\circ(t)}\bigr]
    = \sum_{k=0}^{\infty} e^{-\bar\rho}\frac{\bar\rho^k}{k!}
      \bigl(F_{Y}^{\mathrm{res}}(t)e^{-s} + \bar{F}_{Y}^{\mathrm{res}}(t)\bigr)^k .
\end{equation*}
The sum is the Taylor expansion of an exponential, giving
\begin{equation*}
\begin{split}
        \E\bigl[e^{-s N^\circ(t)}\bigr]
    &= \exp \Bigl(\bar\rho\bigl(F_{Y}^{\mathrm{res}}(t)e^{-s} + \bar{F}_{Y}^{\mathrm{res}}(t) - 1\bigr)\Bigr)\\
    &= \exp \Bigl(\bar\rho(e^{-s}-1)F_{Y}^{\mathrm{res}}(t)\Bigr).
\end{split}
\end{equation*}

We next analyze $N^+(t)$. The $\CP$ arrivals form a Poisson process with rate $\lambda$. Consider a primary arrival at time $u\in(0,t]$. It contributes one count to $N^+(t)$, and its corresponding $\CS$ customer contributes a second count if and only if the triggering delay satisfies $Y\le t-u$. Therefore, conditional on a primary arrival at time $u$, the LST contribution is
\begin{equation*}
    e^{-s}\bigl(F_Y(t-u)e^{-s}+\bar F_Y(t-u)\bigr).
\end{equation*}
By the Poisson compounding formula,
\begin{equation*}
\begin{split}
        &\E\bigl[e^{-sN^+(t)}\bigr]
    = \\&\exp\left(
    \lambda \int_0^t
    \Bigl(
        e^{-s}\bigl(F_Y(t-u)e^{-s}+\bar F_Y(t-u)\bigr)-1
    \Bigr)\diff u
    \right).
\end{split}
\end{equation*}
After simplification and the change of variables $v=t-u$,
\begin{equation*}
\begin{split}
        &\E\bigl[e^{-sN^+(t)}\bigr]
    =\\ & \quad \quad \exp\left(
        \lambda t(e^{-s}-1)
        + \lambda e^{-s}(e^{-s}-1)\int_0^t F_Y(u)\,\diff u
    \right).
\end{split}
\end{equation*}

Since $N^\circ(t)$ and $N^+(t)$ are independent, the transform of $N(t)=N^\circ(t)+N^+(t)$ is the product of the two transforms. Using
\begin{equation*}
    \begin{split}
        \bar\rho\,F_Y^{\mathrm{res}}(t)
= \lambda\E[Y]\,F_Y^{\mathrm{res}}(t) &= \lambda\int_0^t \bar F_Y(u)\,\diff u\\
&= \lambda\Bigl(t-\int_0^t F_Y(u)\,\diff u\Bigr),
    \end{split}
\end{equation*}
we obtain
\begin{equation}
\label{eq:app:logLST_N}
\begin{split}
    \log \mathbb{E}\left[e^{-s N(t)}\right]
=& \bar\rho (e^{-s}-1)F_Y^{\mathrm{res}}(t) + \lambda t(e^{-s}-1)\\
&+ \lambda e^{-s}(e^{-s}-1)\int_0^t F_Y(u)\,\mathrm{d}u.
\end{split}
\end{equation}

We now show that $N(t)$ cannot be a stationary renewal process for any choice of $Y$. Let $\delta = e^{-s}-1$. Then \eqref{eq:app:logLST_N} can be written as
\[
\log \mathbb{E}[e^{-sN(t)}] = 2\lambda t \delta + \lambda \alpha(t)\delta^2,
\qquad
\alpha(t) := \int_0^t F_Y(u)\,\mathrm{d}u.
\]

Define
\[
\widehat{M}(s,\theta) := \int_0^\infty e^{-\theta t}\mathbb{E}[e^{-sN(t)}]\,\mathrm{d}t,
\qquad
\widehat{F}_Y(\theta) := \mathbb{E}[e^{-\theta Y}].
\]
Using $\mathcal{L}\{\alpha\} = \widehat{F}_Y(\theta)/\theta^2$ and a Taylor expansion in $\delta$, we obtain
\begin{equation}
\label{eq:agg_exp}
\begin{split}
    \widehat{M}(s,\theta)
=&\frac1\theta
+\frac{2\lambda}{\theta^{2}}\delta
+\Bigl(\lambda\frac{\widehat{F}_Y(\theta)}{\theta^{2}}+\frac{4\lambda^{2}}{\theta^{3}}\Bigr)\delta^{2}\\
&+\Bigl(-2\lambda^{2}\frac{\mathrm{d}}{\mathrm{d}\theta}\Bigl(\frac{\widehat{F}_Y(\theta)}{\theta^{2}}\Bigr)+\frac{8\lambda^{3}}{\theta^{4}}\Bigr)\delta^{3}
+O(\delta^{4}).
\end{split}
\end{equation}

If $N(t)$ were a stationary renewal process with rate $2\lambda$ and interarrival interval LST $\widehat{G}(\theta)$~\cite[Ch.~3, eq.~(6)]{Cox1962}, then
\begin{equation*}
\widehat{M}^{\mathrm{ren}}(s,\theta)
=\frac1\theta+\frac{2\lambda(e^{-s}-1)(1-\widehat{G}(\theta))}{\theta^{2}(1-e^{-s}\widehat{G}(\theta))}.
\end{equation*}
Expanding as a geometric series yields
\begin{equation}\label{eq:ren_exp}
\begin{split}
    \widehat{M}^{\mathrm{ren}}(s,\theta)
=&\frac1\theta+\frac{2\lambda}{\theta^{2}}\delta
+\frac{2\lambda\widehat{G}(\theta)}{\theta^{2}(1-\widehat{G}(\theta))}\delta^{2}\\
&+\frac{2\lambda\widehat{G}(\theta)^{2}}{\theta^{2}(1-\widehat{G}(\theta))^{2}}\delta^{3}+O(\delta^{4}).
\end{split}
\end{equation}

Equating coefficients of $\delta^2$ yields
\begin{equation}
\label{eq:rel_refined}
\frac{\widehat{G}(\theta)}{1-\widehat{G}(\theta)}
= \frac12 \widehat{F}_Y(\theta) + \frac{2\lambda}{\theta}.
\end{equation}
Matching $\delta^3$ terms and simplifying leads to the differential equation
\begin{equation}
\label{eq:ODE_refined}
\frac{\mathrm{d}}{\mathrm{d}\theta}\widehat{F}_Y(\theta)
= -\frac{1}{4\lambda}\widehat{F}_Y(\theta)^2,
\qquad \widehat{F}_Y(0)=1,
\end{equation}
whose unique solution is $\widehat{F}_Y(\theta)=\frac{4\lambda}{4\lambda+\theta}$, i.e.\ $Y \sim \mathrm{Exp}(4\lambda)$. Substituting into \eqref{eq:rel_refined} yields
\[
\widehat{G}(\theta)
= \frac{4\lambda(\theta+2\lambda)}{\theta^{2}+8\lambda\theta+8\lambda^{2}}.
\]

Insert $Y\sim\Exp(4\lambda)$ into the original (unexpanded) aggregate double transform. With $F_Y(t)=1-e^{-4\lambda t}$, we have
$\alpha(t)=t-\frac{1-e^{-4\lambda t}}{4\lambda}$.
Hence
\begin{equation*}
\begin{split}
        \widehat{M}(s,\theta)
\!=\!e^{-\delta^{2}/4}\!\!\int_{0}^{\infty}\!\!
   \exp \Bigl(\!-(\theta-2\lambda\delta-\lambda\delta^{2})t
               +\frac{\delta^{2}}{4}e^{-4\lambda t}\!\Bigr)\diff t.
\end{split}
\end{equation*}
The integral is the Laplace transform of $\exp \bigl(\frac{\delta^{2}}{4}e^{-4\lambda t}\bigr)$, shifted in $\theta$. For any non-zero $\delta$ (i.e.\ $s\neq0$), this function is \emph{not} a rational function of $\theta$: the term $\exp(c e^{-4\lambda t})$ in the time domain does not have a rational Laplace transform.

Even in this exceptional exponential case, the resulting double transform $\widehat{M}(s,\theta)$ contains a non-rational Laplace structure due to terms of the form $\exp(c e^{-4\lambda t})$, whereas any renewal transform of the form \eqref{eq:ren_exp} is necessarily rational in $\theta$. This contradiction shows that no choice of $Y$ yields a stationary renewal process representation of $N(t)$.
This establishes property~(iii) of Lemma~\ref{lem:model:arrival_props}.

% --- Bibliographic References ---
\bibliographystyle{IEEEtran}
\bibliography{IEEEexample}

\end{document}